\begin{document}

{\theoremstyle{plain}
  \newtheorem{theorem}{Theorem}[section]
  \newtheorem{corollary}[theorem]{Corollary}
  \newtheorem{proposition}[theorem]{Proposition}
  \newtheorem{lemma}[theorem]{Lemma}
  \newtheorem{question}[theorem]{Question}
  \newtheorem{conjecture}[theorem]{Conjecture}
  \newtheorem{claim}[theorem]{Claim}
}

{\theoremstyle{definition}
  \newtheorem{definition}[theorem]{Definition}
  \newtheorem{remark}[theorem]{Remark}
  \newtheorem{example}[theorem]{Example}
}

\numberwithin{equation}{section}
\def\Q{\mathbb{Q}}
\def\Z{\mathbb{Z}}
\def\N{\mathbb{N}}
\def\R{\mathbb{R}}
\def\C{\mathbb{C}}

\def \a {\alpha}
\def \aa {\overline{\alpha}}
\def \b {\beta}
\def \bb {\overline{\beta}}
\def \d {\delta}
\def \e {\epsilon}
\def \l {\lambda}
\def \f {\phi}
\def \g {\gamma}
\def \G {\Gamma}
\def \H {\mathcal{H}}
\def \p {\pi}
\def \n {\nu}
\def \m {\mu}
\def \s {\sigma}
\def \S {\Sigma}
\def \w {\omega}
\def \F {\Phi}
\def \O {\mathcal{O}}
\def \W {\Omega}
\def \k {\bold{k}}
\def \bp {\bold{p}}
\def \U {\mathcal{U}}
\def \t {\tau}
\def\D {\mathcal{D}}
\def\X {\mathcal{X}}
\def\cR {\mathcal{R}}
\def\S {\mathcal{S}}
\def\NN{\mathcal{N}}
\def \P {\mathcal{P}}
\def \T {\mathcal{T}}
\def \R {\mathbb{R}}
\def \N {\mathbb{N}}
\def \Z {\mathbb{Z}}
\def \A{\mathcal{A}}
\def \et {\eta}
\def \th {\theta}
\def \X {\mathcal{X}}
\def \Y {\mathcal{Y}}
\def\k{\mathbf{k}}

\def \ra {\rightarrow}

\title{Generating sequences of rank 1 valuations} 
\author{Olga Kashcheyeva}
\address{University of Illinois at Chicago, Department of Mathematics,
Statistics and Computer Science, 851 S. Morgan (m/c 249), Chicago,
IL 60607, USA} \email{kolga@uic.edu}

\begin{abstract} We consider a rank 1 valuation centered in a regular local ring. Given an algebraic function field $K$ of transcendence degree 3 over $\k$, a regular local ring $R$  with $QF(R)=K$  and a rational $\k$-valuation $\n$ of $K$, we provide an algorithm for constructing  a generating sequences for $\n$ in $R$. This is a generalization of the algorithm of \cite{K} where only valuations of rational rank 1 were considered. We then give a comparison example of a rational rank 3 valuation centered in a ring of polynomials in three variables for which we explicitly compute a set of key polynomials and a generating sequence.
\end{abstract}

\maketitle

\section{Introduction}

The graded algebra $gr_{\n} R$ associated to a valuation $\n$ centered in a domain $R$ encodes important geometric properties of the valuation. It plays a central role in the approach to local uniformization developed by Teissier (\cite {T1}, \cite {T2}),  and it is investigated further in \cite{CMT}.  A generating sequence of a valuation is a set of elements of $R$ whose images generate $gr_{\n} R$. Explicit constructions of generating sequences of rational valuations centered in 2-dimensional regular local rings together with their connection to sequences of quadratic transforms along the valuation can be found in \cite{Sp}, \cite{CP}, \cite{GHK} . In this paper we work with rank 1 rational valuations centered in 3-dimensional regular local rings. 

More precisely, let $\k$ be a field and $K$ be an algebraic function field of transcendence degree 3 over $\k$. Let 
$\n$ be a rank 1 $\k$-valuation on $K$ with valuation ring $(V,m_V)$, value group $\G_{\n}\subseteq \R$, and residue field $V/m_V=\k$.  Let $(R, m_R)$ be a local subring of $K$ with $\k\subseteq R$ and $QF(R)=K$. We assume that $R$ is dominated by $\n$, that is $R\subseteq V$ and $R\cap m_V=m_R$. We also assume that $R$ is regular.  Recall that $S_R = \nu(R \backslash \{0\})$ denotes the value semigroup of $R$, and for  $\s \in S_R$, $I_{\s}=\{f\in R \mid \ \n(f)\ge \s\}$  denotes the corresponding valuation ideal of $R$. Since $R$ is Noetherian, the semigroup $S_R$ is well ordered, and an element $\s\in\S_R$ has a unique successor $\s_+\in S_R$ such that $I_{\s_+}=\{f\in R \mid \ \n(f)> \s\}$. In this set up the associated graded algebra of $R$ along $\n$ is 
$$gr_{\n}(R)=\bigoplus_{\s\in\S_R}I_{\s}/I_{\s_+} $$
The degree zero subring of $gr_{\n}(R)$ is isomorphic to $\k$, and  $gr_{\n}(R)$ is an algebra over $\k$ that is isomorphic to the semigroup algebra of the value semigroup $S_R$. It is a domain which is generally not Noetherian. It can be represented as the quotient of a polynomial algebra by a prime binomial ideal (see \cite{T2} ).

A collection $\{ Q_i \}_{i\in I}$ of elements of $R$ is a {\it generating sequence} of $\n$  if for every $\s\in S_R$ the ideal $I_\s$ is generated by the set $\{\prod _{i\in I}Q_i^{b_i} \mid b_i \in \N_0, \s\le \sum_{i\in I} b_i\n(Q_i)<\infty\}$. 
A generating sequence of $\n$ is minimal if none of its proper subsequences is a generating sequence of $\n$.

In section \ref{construction} we provide an algorithm for constructing generating sequences of rank 1 valuations. This is a generalization of the algorithm of \cite{K} where only rational rank 1 valuations were considered. In the construction we obtain the sequence of jumping polynomials $\{P_j\}_{j>0}\cup\{T_j\}_{j>0}$. Using the results of section \ref{properties1} we then show that $\{P_j\}_{j>0}\cup\{T_j\}_{j>0}$ is a generating sequence of valuation in section \ref{generating seq}. The explicit formulas of section \ref{construction} provide a presentation by generators and binomial relations for the associated graded algebra $gr_{\n} R$.

\section{Notation and definitions}

We denote by $\N$ the set of all positive integers, and we denote by $\N_0$ the set of all nonnegative integers. Let $n\in\N$ be a fixed positive integer. If $A\in \N_0^n$ denote by $a_1,a_2,\dots,a_n$ the components of $A$, so that $A=(a_1,a_2,\dots, a_n)$ with $a_1,a_2,\dots,a_n\in\N_0$. If $k\in\N$ is such that $k> n$ and $A\in \N_0^n$ by abuse of notation we also say that $A\in  \N_0^k$ with $a_i=0$ for all $i>n$. Moreover, we consider the set of infinite sequences of nonnegative integers with finitely many nonzero terms $\NN=\bigcup_{k=1}^{\infty} \N_0^k$, and for $A\in  \N_0^n$ we say that $A=(a_1,a_2,\dots, a_n, 0,0,\dots)$ as an element of  $\NN$. Also, for $n,m\in\N$ and $(A,C)\in\N_0^n\times\N_0^m$ we say that $(A,C)$ is an element of $\NN\times\NN$ as $A\in\NN$ and $C\in\NN$, while $AC=(a_1,a_2,\dots,a_n,c_1,c_2,\dots,c_m)$ is an element of $\N_0^{n+m}$. We denote by $E_n\in\N^n$ the vector with the $n-th$ entry equal to 1 and all other entries equal to 0, and we denote by $\bar{0}\in\N^n$ the vector with all entries equal to 0.

\begin{definition}
 For two elements $A,B\in\NN$ we say that $A$ is less than or equal to $B$, denoted $A\preceq B$, if $a_i\le b_i$ for all $i\ge 1$.  We say that $A$ is strictly less than $B$, denoted $A\prec B$, if $A\preceq B$ and $A\neq B$. 
\end{definition}

Let $\{\a_j\}_{j>0}$ be a possibly infinite sequence of nonnegative real numbers. We denote by $\a(n)$ the vector $(\a_1,\a_2,\dots,\a_n)$ and we denote by $\a$ the vector $(\a_1,\a_2,\dots,\a_n,\dots)$. We consider the additive group $G_n$ and the additive semigroup $S_n$ generated by $\a_1,\a_2,\dots,\a_n$:
$$ G_n =\sum_{i=1}^n\a_i\Z \quad\quad\quad {\text {and}} \quad\quad\quad  S_n =\sum_{i=1}^n\a_i\N_0$$
 We also write $G_n=<\a(n)>$ and $S_n=[\a(n)]$. If $A\in\NN$ we denote the linear combination $a_1\a_1+\dots+a_n\a_n$ by $A\cdot\a(n)$. 
 
A real number $\s$ is said to be commensurable with a subgroup $G\subseteq\R$ if $\s\in\Q\otimes_\Z G$, or equivalently, if there exists $s\in\N$ such that $s\s\in G$.  We say $\a_{n+1}$ is commensurable with $\a(n)$ if $\a_{n+1}$ is commensurable with $G_n=<\a(n)>$. We say that  $A\in\NN$ pushes $\a_{n+1}$  into the semigroup $S_n$ if $a_{n+1}>0$ and $A\cdot\a(n+1) \in S_n$. Observe that, provided $\a_{n+1}$ is commensurable with $\a(n)$, there always exists $A\in\N_0^{n+1}$ that pushes $\a_{n+1}$ into $S_n$. Indeed, since $\a_{n+1}$ is commensurable  with $\a(n)$ there exist $s\in \N$ and $b_1,b_2,\dots,b_n\in \Z$ such that $s\a_{n+1}=b_1\a_1+b_2\a_2+\dots+b_n\a_n$. For $1\le i\le n$ set $a_i=-b_i$ if $b_i\le 0$, and set $a_i=0$ if $b_i>0$. Then $A=(a_1,a_2,\dots,a_n,s)$ pushes $\a_{n+1}$ into $S_n$. Conversely, if there exists  $A\in\NN$ that pushes $\a_{n+1}$ into $S_n$ then $\a_{n+1}$ must be commensurable with $\a(n)$. Indeed, since  $A\cdot \a(n+1)\in S_n$ it follows that $a_{n+1}\a_{n+1}\in G_n$ with $a_{n+1}\in\N$.

\begin{definition} 
For a sequence of nonnegative real numbers  $\{\a_j\}_{j>0}$ let $S_n=[\a(n)]$, and  let $\a_{n+1}$ be commensurable with $\a(n)$. We say that $A\in\NN$ is {\it  reduced} with respect to $\a(n+1)$ if $A$ pushes $\a_{n+1}$ into $S_n$ and no $B\in\NN$ that is strictly less than $A$ pushes $\a_{n+1}$ into $S_n$. 
\end{definition}

\begin{definition}
If $A,C\in\NN$ and $\T\subseteq \NN\times\NN$ we say that $(A,C)$ is {\it irreducible} with respect to $\T$ if $(B,D)\notin \T$ for all $B\preceq A$ and $D\preceq C$.
\end{definition}

Let $i\in\{1,\dots,n\}$ be a fixed index. Denote by $\pi_i: \N_0^n\ra \N_0$ the projection onto $i$-th component: $\pi_i(A)=a_i$. In particular, for two elements $A,B\in\NN$ we have $A\preceq B$ if and only if $\pi_i(A)\le\pi_i(B)$ for all $i\ge 1$.  Denote by $\pi_{\hat{i}}: \N_0^n\ra \N_0^{n-1}$ the projection onto $(n-1)$-tuple that excludes  the $i$-th component: $\pi_{\hat{i}}(A)=(a_1,\dots,a_{i-1},a_{i+1},\dots,a_n)$.

If $x_1,x_2,\dots,x_n,\dots$ are variables and $A\in\N_0^n$, $x^A$ denotes the monomial $x_1^{a_1}x_2^{a_2}\cdots x_n^{a_n}$.

\section{Construction of jumping polynomials}\label{construction}

We assume that $\k$ is a field and $K$ is an
algebraic function field of transcendence degree 3 over $\k$.
$\n$ is a rank 1 $\k$-valuation on $K$ with valuation ring $(V,m_V)$,
value group $\G\subset \R$, and residue field $V/m_V=\k$. 
$(R, m_R)$ is a local subring of $K$ with $\k\subset R$
and $R_{(0)}=K$. We assume that $R$ is dominated by $\n$, that is $R\subset V$ and $R\cap m_V=m_R$.  
We also assume that 
 $R$ is a regular ring with regular parameters $x,y,z$ and $\n(x)\le\n(y)\le\n(z)$. 

To construct a generating sequence of $\n$ in $R$ we define
the set of jumping polynomials $\{P_j\}_{j>0}\cup\{T_j\}_{j>0}$ in $R$.  

Let $P_1= x,\; P_2=y$ and  $\P_1=\P_2=\emptyset$. For all $i\ge 1$
we set $\b_i=\n(P_i)$ and $G_i=<\b(i)>$,  $S_i=[\b(i)]$.

For $i>1$, if $\b_i$ is commensurable with $\b(i-1)$, set $q_i=\min\{q\in\N\mid q\b_i\in G_{i-1}\}$.
 Let $L(i)\in\N_0^{i-1}$ be such that $(L(i),\bar{0})$
is irreducible with respect to $\P_i$ and $q_i\b_i=L(i)\cdot\b(i-1)$. Denote by $\l_i$ the residue of
$P_i^{q_i}/P^{L(i)}$ in $V/m_V$ and set $P_{i+1}=P_i^{q_i}-\l_iP^{L(i)}$. Also, set $\P_{i+1}=\P_i\cup\{(q_iE_i,\bar{0})\}$. 

For $i>1$, if $\b_i$ is not commensurable with $\b(i-1)$, set $q_i=\infty$. The sequence $\{P_j\}_{j>0}$ is finite in this case with $P_i$ being the last element.

Finally set $G_0=\{0\}$, $q_1=\infty$, $G=\bigcup_{j>0}G_j$,  $S=\bigcup_{j>0} S_j$
and $\P=\bigcup_{j>0}\P_j$.

\begin{remark}
The sequence $\{P_j\}_{j> 0}$ is well defined since for every $j\ge 2$ such that  $\b_j$ is commensurable with $\b(j-1)$ there exists a unique $L(j)\in\N_0^{j-1}$ such that $(L(j),\bar{0})$ is irreducible with respect to $\P_j$ and $q_j\b_j=L(j)\cdot\b(j-1)$. 
 (See Corollary \ref{P-well-defined}) Moreover, $\n(P_j^{q_j})=\n(P^{L(j)})$, $\l_j\in \k\setminus\{0\}$ and $q_j\b_j<\b_{j+1}<\infty$ in this case.

\end{remark}

Let $T_1=z$, $m_0=1$, $\d_0=1$, $\T_1=\P$, $H_0=\{0\}$ and
$U_0=\{0\}$. For all $i\ge 1$  we set  $\g_i=\n(T_i)$ if $T_i\neq 0$ and $\g_i=0$ if $T_i=0$,  also set  
$H_i=<\g(i)>$, $U_i=[\g(i)]$.

For all $i\ge 1$ such that $T_i\neq 0$, if $\g_i$ is commensurable  with $ G+H_{i-1}$, set
\begin{center}
$
\begin{array}{rl}
s_i =&\min\{s\in\N\,\mid s\g_i\in (G+H_{i-1})\}\\
m_i =&\max(m_{i-1}, \min\{j\in \N \mid s_i\g_i\in(G_j+H_{i-1})\})\\
\D_i =&\{(A,C)\in\N_0^{m_i}\times\N_0^i \mid  (A,C)
\text{ is irreducible with respect to }  \T_i\\
 & \quad\quad\quad \text{ and }AC \text{ is reduced with
respect to } (\b_1,\dots,\b_{m_i},\g_1,\dots,\g_i)\}\\

\T_{i+1}=&\T_i\cup\D_i\\ 
\end{array}
$
\end{center}
Set $\d_i=\#\D_i$. For $(A,C)\in\D_i$ define
$|(A,C)|=A\cdot\b(m_i)+C\cdot\g(i)$. We
assume that $\D_i=\{(A,C)_1,\dots,(A,C)_{\d_i}\}$ is an ordered set
with $|(A,C)_1|\le\dots\le|(A,C)_{\d_i}|$.
If $(A,C)=(A,C)_k$ for some $k$,  let
$L(AC)\in\N_0^{m_i}$ and $N(AC)\in\N_0^{i-1}$ be such 
that $(L(AC),N(AC))$ is irreducible with respect to $\T_i$ and
$|(A,C)|=|(L(AC),N(AC))|$, where $|(L(AC),N(AC))|$ is defined to be $L(AC)\cdot\b(m_i)+N(AC)\cdot\g(i-1)$.
 Denote by $\m_{AC}$ the
residue of $P^AT^C/P^{L(AC)}T^{N(AC)}$
in $V/m_V$ and set
$$
T_{(\sum_{j=0}^{i-1}\d_j)+k}=T_{AC}=P^AT^C-\m_{AC}P^{L(AC)}T^{N(AC)}
$$

For all $i\ge 1$ such that $\g_i$ is not commensurable  with $ G+H_{i-1}$ we set $s_i=\infty$, $m_i=m_{i-1}$, $\D_i=\emptyset$, $\d_i=0$ and $\T_{i+1}=\T_i$. For all $i>1$ such that $T_i=0$  we set $s_i=1$, $m_i=m_{i-1}$, $\D_i=\emptyset$, $\d_i=0$ and $\T_{i+1}=\T_i\cup\{(\bar{0},E_i)\}$. 

Finally set $H=\bigcup_{i\ge 0} H_i$,  $U=\bigcup_{i\ge 0}U_i$
and $\T=\bigcup_{i\ge 0}\T_i$.

\begin{remark}
The sequence $\{T_j\}_{j>0}$ is well defined since for every $j\ge 1$ such that  $\g_j$ is commensurable with $G+H_{j-1}$ we have $\d_j<\infty$, and for every  $(A,C)\in\D_j$ there exist unique $L(AC)\in\N_0^{m_j}$ and $N(AC)\in\N_0^{j-1}$  such that $(L(AC),N(AC))$ is irreducible with respect to $\T_j$ and
$|(A,C)|=|(L(AC),N(AC))|$.  (See discussion preceding Lemma \ref{finiteness},  Lemma \ref{semigroup representation} and Lemma \ref{semigroup uniqueness})
 Moreover,  $\n(P^AT^C)=\n(P^{L(AC)}T^{N(AC)})$, $\m_{AC}\in \k\backslash\{0\}$ and $|(A,C)|=\n(P^AT^C)<\n(T_{AC})$ in this case.
\end{remark}

For $i,j>0$ we say that $T_j$ is an {\it immediate successor} of $T_i$  if there exists $(A,C)\in\D_i$ such that $T_j=T_{AC}$. We say that $T_j$ is a {\it successor} of $T_i$ if there exists  $k\in\N$ and a sequence of integers $l_0,l_1,\dots,l_k$ such that $l_0=i$, $l_k=j$ and  $T_{l_t}$ is an immediate successor of $T_{l_{t-1}}$ for all $1\le t\le k$. 

\section{Properties of jumping polynomials}\label{properties1}

The goal of this section is to justify the  construction of jumping polynomials. All notations are as introduced in section \ref{construction}.

Let $k\in\N$ and $i\in\N_0$ be fixed integers. We consider the set $\Z^k\times\Z^i$, where we adopt the notation $\Z^k\times\Z^0$ for $\Z^k$. We say that $(A,C)\in\Z^k\times\Z^i$ is a permissible coefficient vector if $0\le a_j<q_j$ for all $j$ such that $q_j<\infty$ and $0\le c_t<s_t$ for all $t$ such that $s_t<\infty$. We also use the notation $|(A,C)|=A\cdot\b(k)+C\cdot\g(i)$.

\begin{lemma}\label{group uniqueness}
Suppose that $k\in\N$ and $i\in\N_0$.  If $(A,C)\in\Z^k\times\Z^i$ and $(B,D)\in\Z^k\times\Z^i$ are permissible coefficient vectors such that $|(A,C)|=|(B,D)|$ then $(A,C)=(B,D)$.
\end{lemma}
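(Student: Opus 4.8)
The plan is to prove this by induction on $k$, peeling off the last coordinate. The statement says that the map from permissible coefficient vectors $(A,C) \in \Z^k\times\Z^i$ to $\R$ given by $(A,C)\mapsto |(A,C)| = A\cdot\b(k)+C\cdot\g(i)$ is injective. Suppose $|(A,C)| = |(B,D)|$, i.e.\ $(A-B)\cdot\b(k) + (C-D)\cdot\g(i) = 0$. Set $E = A-B \in \Z^k$ and $F = C-D \in \Z^i$; then $|E\cdot\b(k)| < \infty$ automatically and $E\cdot\b(k) + F\cdot\g(i) = 0 \in G_k + H_i$. I want to show $E = \bar 0$ and $F = \bar 0$.

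First I would handle the $\b$-coordinates. Look at the largest index $j \le k$ with $e_j \neq 0$. Because $(A,C)$ and $(B,D)$ are permissible we have $|e_j| = |a_j - b_j| < q_j$, and in particular $q_j < \infty$ (if $q_j = \infty$ then by permissibility there is no bound, but the relation $q_j\b_j \in G_{j-1}$ fails, so $\b_j$ is not commensurable with $\b(j-1)$ — one checks that then $e_j\b_j$ cannot be cancelled by the remaining terms, which lie in $G_{j-1}+H_i$, forcing $e_j = 0$, a contradiction). Given $q_j < \infty$, the defining property $q_j = \min\{q\in\N \mid q\b_j \in G_{j-1}\}$ together with $e_j\b_j \in G_{j-1} + \sum_{t} F_t\g(i) $... — here I need the $\g$'s also to be commensurable in the right way. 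So more carefully: the relation says $e_j \b_j \in G_{j-1} + H_i$ after moving the other terms over, but actually I should first deal with the $\g$-side symmetrically, or interleave the two. The clean approach: among all nonzero coordinates of $(E,F)$, there is a "highest" one in the combined ordering where $\g$'s come after all $\b$'s (since each $s_t$ is defined relative to $G + H_{t-1}$, and each $q_j$ relative to $G_{j-1}$). Take the highest nonzero coordinate. If it is a $\g$-coordinate $t$, then $s_t < \infty$ (same incommensurability argument as above), $|f_t| < s_t$, and $f_t\g_t \in G + H_{t-1}$ (moving everything else, which lies in $G_k + H_{t-1}$, to the other side); but $s_t$ is the minimal positive integer with $s_t\g_t \in G + H_{t-1}$, contradicting $0 < |f_t| < s_t$. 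So the highest nonzero coordinate must be a $\b$-coordinate $j$; then $e_j\b_j \in G_{j-1}$ (all other surviving terms lie in $G_{j-1}$ — the $\g$-terms are all zero now), and minimality of $q_j$ with $0 < |e_j| < q_j$ gives a contradiction. Hence $E = \bar 0$ and $F = \bar 0$, i.e.\ $(A,C) = (B,D)$.

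The main obstacle I anticipate is bookkeeping: making the "highest nonzero coordinate" argument precise when $i = 0$ (so there are no $\g$'s and the statement is purely about $\b(k)$, which is the cleanest case and essentially Lemma 2.x of \cite{K}), and correctly justifying the incommensurability step — namely that if $q_j = \infty$ then $e_j\b_j$ with $e_j\neq 0$ cannot equal an element of $G_{j-1}+H_i$, which requires knowing that $H_i \subseteq G + \text{(lower }\g\text{'s)}$ in a way that doesn't reintroduce $\b_j$; this is where I would invoke the construction's guarantee that each $\g_t$ is handled only when commensurable with $G + H_{t-1}$, so $H_i \subseteq \Q\otimes_\Z G_{m_i}$ and $\b_j$ with large $j$ does not appear. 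Once the indexing conventions are fixed, each step is a one-line appeal to the minimality defining $q_j$ or $s_t$.
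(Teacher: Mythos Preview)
Your clean approach is correct and matches the paper's proof exactly: first show $C=D$ by taking the largest $t$ with $c_t\neq d_t$ and using minimality of $s_t$ against $(d_t-c_t)\gamma_t\in G+H_{t-1}$, then show $A=B$ the same way with $q_j$ and $G_{j-1}$. Your anticipated obstacle about $q_j=\infty$ and $G_{j-1}+H_i$ is a leftover from the abandoned $\beta$-first attempt and does not arise in the clean approach---once the $\gamma$-coordinates are handled you have $F=0$, so the relation for $e_j\beta_j$ lands in $G_{j-1}$, not $G_{j-1}+H_i$, and no appeal to $H_i\subseteq\Q\otimes_\Z G_{m_i}$ is needed.
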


\begin{proof}

 Assume first that $i>0$ and $C\neq D$. Let $j=\max\{t\in\N\mid c_t\neq d_t\}$. Then, without loss of generality, we have $c_j<d_j$, and therefore, $0<d_j-c_j<s_j$. Since $c_t=d_t$ for all $j<t\le i$ the equality $|(A,C)|=|(B,D)|$ implies 
  $$(d_j-c_j)\g_j=\sum_{t=1}^{k}(a_t-b_t)\b_t+\sum_{t=1}^{j-1}(c_t-d_t)\g_t$$
So, $(d_j-c_j)\g_j\in(G+H_{j-1})$. This contradicts the definition of $s_j$ as the minimal positive integer $s$ such that $s\g_j\in (G+H_{j-1})$. Thus, $C=D$.
 
 Assume now that $i\ge 0$, $C=D$ and $A\neq B$.  Let $j=\max\{t\in\N\mid a_t\neq b_t\}$. Then, without loss of generality, we have $a_j<b_j$,  and therefore,  $0<b_j-a_j<q_j$. Then since $C=D$ and $a_t=b_t$ for all $j<t\le m$ the equality $|(A,B)|=|(C,D)|$ implies 
 $$(b_j-a_j)\b_j=\sum_{t=1}^{j-1}(a_t-b_t)\b_t$$
So, $(b_j-a_j)\b_j\in G_{j-1}$. This contradicts the definition of $q_j$ as the minimal positive integer $q$ such that $q\b_j\in G_{j-1}$. Thus, $A=B$.
\end{proof}

\begin{lemma}\label{group representation}
Suppose that $\a\in(G_k+H_i)$ for some $k\in\N$ and $i\in\N_0$. Then there exists a unique  permissible coefficient vector $(L,N)\in\Z^m\times\Z^i$
such that  $m=max(k,m_i)$ and $\a=|(L,N)|$.
\end{lemma}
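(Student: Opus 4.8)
The plan is to reduce the statement about $G_k + H_i$ to two applications of the division algorithm implicit in the definitions of the $q_j$'s and $s_j$'s, together with Lemma~\ref{group uniqueness} for uniqueness. Since $\alpha \in G_k + H_i$, write $\alpha = \sum_{j=1}^k u_j \beta_j + \sum_{t=1}^i v_t \gamma_t$ with $u_j, v_t \in \Z$. The strategy is first to reduce the $\gamma$-coefficients, then the $\beta$-coefficients, working from the top index downward so that each reduction step does not disturb the coefficients already brought into range.

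First I would handle the $\gamma$-part. Working downward from $t = i$, suppose $v_t \notin \{0, \dots, s_t - 1\}$ (if $s_t = \infty$ we leave $v_t$ alone, but then $\gamma_t$ is incommensurable with $G + H_{t-1}$, so actually the corresponding term cannot appear nontrivially in a reduced expression; more carefully, I will argue that any element of $G_k+H_i$ can be rewritten so no incommensurable $\gamma_t$ occurs). Using the defining property $s_t \gamma_t \in G + H_{t-1}$, replace $v_t \gamma_t$ by writing $v_t = s_t \lfloor v_t/s_t\rfloor + r_t$ with $0 \le r_t < s_t$ and substituting $s_t \gamma_t = |(L', N')|$ for appropriate lower-index coefficient vectors; this pushes the excess into indices $< t$ (into $G_{m_t}$ and $H_{t-1}$) without changing coefficients $c_{t+1}, \dots, c_i$, which are already in range. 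Iterating down to $t = 1$ produces $\alpha = \sum_j a_j \beta_j + \sum_t c_t \gamma_t$ with $0 \le c_t < s_t$ whenever $s_t < \infty$, and the $\beta$-sum now ranges over indices up to $\max(k, m_i)$ because the substitutions for $s_t \gamma_t$ can involve $\beta_j$ with $j$ as large as $m_t \le m_i$.

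Next I would reduce the $\beta$-part by the same downward process, now using $q_j \beta_j \in G_{j-1}$: for each $j$ from $m = \max(k, m_i)$ down to $1$, divide $a_j$ by $q_j$ (when $q_j < \infty$) and absorb the quotient into the lower $\beta_t$, $t < j$, leaving $0 \le a_j < q_j$. This does not touch the $\gamma$-coefficients at all, so they remain reduced. The result is a permissible coefficient vector $(L, N) \in \Z^m \times \Z^i$ with $\alpha = |(L,N)|$. Uniqueness is then immediate from Lemma~\ref{group uniqueness}: if $(L,N)$ and $(L',N')$ are both permissible with $|(L,N)| = |(L',N')| = \alpha$, that lemma (applied with this $m$ and $i$) forces $(L,N) = (L',N')$.

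The main obstacle I anticipate is bookkeeping the index ranges: one must check that when $s_t \gamma_t$ is rewritten inside $G + H_{t-1}$ the $\beta$-indices appearing never exceed $m_i$ — this is exactly why $m = \max(k, m_i)$ and not just $k$ — and one must confirm that reducing a coefficient at index $j$ only creates new contributions at strictly smaller indices, so that the downward induction genuinely terminates with every coefficient in range simultaneously. A secondary point requiring care is the incommensurable case: if some $s_t = \infty$ or $q_j = \infty$, I need to know that such a term can always be eliminated from an expression representing an element of $G_k + H_i$, which follows because an incommensurable generator contributes a direct summand to the group and $\alpha$ lies in the (commensurable) subgroup generated by the remaining ones. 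Once these index and commensurability checks are in place, the argument is the standard "normal form via successive division" together with the already-proved uniqueness lemma.
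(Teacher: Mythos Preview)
Your approach is essentially the paper's: both obtain existence by top-down division with remainder (using $s_t\gamma_t\in G_{m_t}+H_{t-1}$ and $q_j\beta_j\in G_{j-1}$) and then invoke Lemma~\ref{group uniqueness} for uniqueness. The paper packages this as a formal induction on $(i,k)$, interleaving the $\beta$- and $\gamma$-reductions, while you separate them into two sweeps (all $\gamma$'s, then all $\beta$'s); either ordering works, since a $\gamma_t$-reduction only creates lower $\gamma$- and $\beta$-terms, and a $\beta_j$-reduction only creates lower $\beta$-terms.

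One point needs correction: your remarks on the incommensurable case are confused. When $s_t=\infty$ (resp.\ $q_j=\infty$), the definition of \emph{permissible} imposes no constraint on $c_t$ (resp.\ $a_j$), so there is nothing to reduce --- your first instinct, ``leave $v_t$ alone'', is exactly right and is what the paper does. Your subsequent claim that such a term ``cannot appear nontrivially'' and can always be eliminated is false: for instance $\alpha=\gamma_t$ itself lies in $H_i$, and by Lemma~\ref{group uniqueness} its unique permissible representation is $(\bar 0,E_t)$, with a nonzero coefficient on the incommensurable $\gamma_t$. So this is not an obstacle at all; simply drop the parenthetical and the ``secondary point'' paragraph.
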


\begin{proof} 
We first observe that if $k\le m_i$ then $\a\in(G_k+H_i)$ implies
$\a\in(G_{m_i}+H_i)$ whereas the conclusion of the lemma does not depend on $k$.
Thus, it suffices to show that the statement holds for $k, i\in\N_0$ such that  $k\ge m_i$.

We will use induction on $(i,k)$ to prove existence of a permissible coefficient vector $(L,N)\in\Z^k\times\Z^i$
such that  $\a=|(L,N)|$, while uniqueness will follow at once from  Lemma \ref{group uniqueness}. If $i=0$ and $k=1$ the statement is trivial as $q_1=\infty$. 

Assume that $i$ is fixed and $k>m_i$. Since $\a\in (G_k+H_i)$ there exists a coefficient vector $(B,D)\in\Z^k\times\Z^i$ such that $\a=|(B,D)|=B\cdot\b(k-1)+b_k\b_k+D\cdot\g(i)$. If $q_k=\infty$ set $l_k=b_k$, otherwise set $l_k$ equal to the residue of $b_k$ modulo $q_k$. Then $(b_k-l_k)\b_k\in G_{k-1}$. So, $\a-l_k\b_k=B\cdot\b(k-1)+(b_k-l_k)\b_k+D\cdot\g(i)$ is an element of  $G_{k-1}+H_i$, and there exists a permissible coefficient vector $(L',N)\in\Z^{k-1}\times\Z^i$ such that   $\a-l_k\b_k=|(L',N)|$. We set $L=(l'_1,l'_2,\dots,l'_{k-1},l_k)$ to get a permissible  coefficient vector $(L,N)\in\Z^k\times\Z^i$ such that $\a=l_k\b_k+L'\cdot\b(k-1)+N\cdot\g(i)=|(L,N)|$.

Assume now that $i\ge 1$ and $k=m_i$. Since $\a\in (G_{m_i}+H_i)$ there exists a coefficient vector $(B,D)\in\Z^{m_i}\times\Z^i$ such that $\a=|(B,D)|=B\cdot\b(m_i)+D\cdot\g(i-1)+d_i\g_i$.  If $s_i=\infty$ set $n_i=d_i$, otherwise set $n_i$ equal to the residue of $d_i$ modulo $s_i$.  Then $(d_i-n_i)\g_i\in (G_{m_i}+H_{i-1})$. So, $(\a-n_i\g_i)=B\cdot\b(m_i)+D\cdot\g(i-1)+(d_i-n_i)\g_i$ is an element of $G_{m_i}+H_{i-1}$, and since $m_i\ge m_{i-1}$ there exists a permissible coefficient vector $(L,N')\in\Z^{m_i}\times\Z^{i-1}$ such that   $\a-n_i\g_i=|(L,N')|$. We set $N=(n'_1,n_2,\dots,n'_{i-1},n_i)$ to get a permissible  coefficient vector $(L,N)\in\Z^{m_i}\times\Z^i$ such that $\a=L\cdot\b(m_i)+N'\cdot\g(i-1)+n_i\g_i=|(L,N)|$.

\end{proof}

We will now verify that  the sequence $\{P_j\}_{j> 0}$ is well defined.  Observe first that if $\{P_j\}_{j> 0}$ is infinite then $q_k<\infty$ for all $k>1$, otherwise $q_k<\infty$ for all $1<k<i$ where $P_i$ is the last element of $\{P_j\}_{j> 0}$.  Also, $\b_k>0$ for all $k\ge 1$, and  $\b_k>q_{k-1}\b_{k-1}$ for all $k\ge 3$. Moreover,  if $k\in\N$ is such that $q_k<\infty$ and $A\in\Z^k$ is a permissible coefficient vector then $\sum_{t=2}^k a_t\b_t<q_k\b_k$. Indeed, since $0\le a_2<q_2$  the inequality holds for $k=2$. If $k>2$ then $q_{k-1}<\infty$ and assuming that the inequality holds for $k-1$ we have
$\sum_{t=2}^k a_t\b_t<q_{k-1}\b_{k-1}+a_k\b_k<\b_k+a_k\b_k\le q_k\b_k$.
Finally,  observe that for $A\in\N_0^k$ the following statements are equivalent: 
\begin{enumerate}
\item $A$ is a permissible coefficient vector.
\item $(A,\bar{0})$ is irreducible with respect to $\P_{k+1}$.
\item $(A,\bar{0})$ is irreducible with respect to $\T$.
\end{enumerate}

\begin{corollary}\label{P-well-defined} 
Let $k\in\N$ be such that $q_k<\infty$. If $\a\in G_k$ satisfies the inequality $\a\ge q_k\b_k$ then there exists a unique $L\in\N_0^k$ such that $(L,\bar{0})$ is irreducible with respect to $\T$ and  $\a=L\cdot\b(k)$.

In particular, there exists a unique $L(k)\in\N_0^{k-1}$ such that $(L(k),\bar{0})$ is irreducible with respect to $\P_k$ and $q_k\b_k=L(k)\cdot\b(k-1)$. 

\end{corollary}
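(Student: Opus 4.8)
The plan is to derive both assertions from Lemma~\ref{group representation} applied with $i=0$, together with the observations recorded just before the statement. For the first assertion, fix $k\in\N$ with $q_k<\infty$; since $q_1=\infty$ this forces $k\ge 2$, and since $q_k<\infty$ the element $P_{k+1}$ is constructed, so in fact $q_2,\dots,q_k<\infty$. Given $\a\in G_k$ with $\a\ge q_k\b_k$, I apply Lemma~\ref{group representation} with $i=0$ (so $H_0=\{0\}$ and $m=\max(k,m_0)=k$): there is a unique permissible coefficient vector $L\in\Z^k$ with $\a=L\cdot\b(k)$.

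The step I expect to require the most care is upgrading $L\in\Z^k$ to $L\in\N_0^k$. Because $q_j<\infty$ for all $2\le j\le k$, permissibility already gives $0\le l_j<q_j$ for those indices, so only the first coordinate $l_1$ can a priori be negative (as $q_1=\infty$ imposes no constraint on it). To rule this out I would invoke the inequality $\sum_{t=2}^k l_t\b_t<q_k\b_k$, valid for permissible vectors when $q_k<\infty$, which gives $l_1\b_1=\a-\sum_{t=2}^k l_t\b_t>\a-q_k\b_k\ge 0$; since $\b_1>0$ this yields $l_1>0$, hence $L\in\N_0^k$. With $L\in\N_0^k$ established, the equivalence (1)$\Leftrightarrow$(3) quoted before the statement shows that $(L,\bar{0})$ is irreducible with respect to $\T$. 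For uniqueness, any $L'\in\N_0^k$ meeting the conclusion is a permissible coefficient vector by (3)$\Rightarrow$(1) and satisfies $L'\cdot\b(k)=\a$, so Lemma~\ref{group uniqueness} forces $L'=L$.

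For the ``in particular'' assertion I would take $\a=q_k\b_k$, which lies in $G_{k-1}$ by the definition of $q_k$. When $k\ge 3$ we have $q_{k-1}<\infty$, and the inequality $\b_k>q_{k-1}\b_{k-1}$ together with $q_k\ge 1$ gives $q_k\b_k\ge\b_k>q_{k-1}\b_{k-1}$; applying the first assertion with $k-1$ in place of $k$ then produces a unique $L(k)\in\N_0^{k-1}$ with $q_k\b_k=L(k)\cdot\b(k-1)$ and $(L(k),\bar{0})$ irreducible with respect to $\T$, and the equivalence (2)$\Leftrightarrow$(3) (with $k-1$ for $k$) rephrases this as irreducibility with respect to $\P_k$. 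The case $k=2$ needs a separate, trivial check, since there $q_1=\infty$: here $\P_2=\emptyset$, $q_2\b_2\in G_1=\b_1\Z$, and $q_2\b_2>0$ with $\b_1>0$, so $q_2\b_2=l_1\b_1$ for a unique $l_1\in\N$ and $(l_1,\bar{0})$ is trivially irreducible with respect to $\P_2$. I do not anticipate a genuine obstacle here; the one delicate point is the bookkeeping that isolates $l_1$ as the sole coordinate whose sign needs checking.
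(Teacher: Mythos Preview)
Your proposal is correct and follows essentially the same approach as the paper's own proof: apply Lemma~\ref{group representation} with $i=0$ to get the unique permissible $L\in\Z^k$, use the inequality $\sum_{t=2}^k l_t\b_t<q_k\b_k$ to force $l_1>0$, and then handle the ``in particular'' part by specializing to $\a=q_k\b_k\in G_{k-1}$ for $k\ge 3$ together with a direct check for $k=2$. You are slightly more explicit than the paper in spelling out the role of the equivalences (1)--(3) and in separating off uniqueness via Lemma~\ref{group uniqueness}, but the argument is the same.
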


\begin{proof} 
We apply Lemma \ref{group representation} to find  the unique permissible vector  $L\in\Z^k$ such that  $\a=L\cdot\b(k)$. To verify the statement of the corollary it suffices to show that $l_1\ge 0$. From the above discussion it follows that 
$ l_1\b_1=\a-\sum_{t=2}^k l_t\b_t>\a-q_k\b_k\ge 0$, and therefore, $l_1>0$.

In particular, if $k>2$ we apply the statement of the corollary to $\a=q_k\b_k$ , an element of $G_{k-1}$. By construction we have $q_{k-1}<\infty$ and $q_k\b_k\ge\b_k>q_{k-1}\b_{k-1}$, so there exists a unique $L(k)\in\N_0^{k-1}$ such that $(L(k),\bar{0})$ is irreducible with respect to $\P_k$ and $q_k\b_k=L(k)\cdot\b(k-1)$. For $k=2$ observe that $q_2\b_2=l\b_1$ for some $l\in\Z$. Since $q_2,\b_2,\b_1>0$ it follows that $l>0$. Thus $L(2)=(l)$ is the required vector.  \end{proof}

For each $j>1$ the jumping polynomial  $P_j$ is a nonzero monic polynomial in $y$ with coefficients in $\k[x]$ and the degree of $P_j$ is $\prod_{t=2}^{j-1} q_t$. Indeed, $P_2=y$ is a monic polynomial of degree 1. If  $j>2$ then 
$P_j=P_{j-1}^{q_{j-1}}-\l_{j-1}P^{L(j-1)}$.  Since $\deg_y P_{j-1}^{q_{j-1}}=q_{j-1}\prod_{t=2}^{j-2} q_t$ and  $\deg_y P^{L(j-1)}=l_2+l_3q_2+l_4q_2q_3+\dots+l_{j-2}q_2\cdots q_{j-3}
<q_2+(q_3-1)q_2+\dots+(q_{j-2}-1)q_2\cdots q_{j-3}= \prod_{t=2}^{j-2} q_t$, the degree and leading term of $P_j$ is determined by $P_{j-1}^{q_{j-1}}$.  Since $P_{j-1}$ is a monic polynomial in $y$ it follows that  $P_j$ is also a monic polynomial in $y$ and the degree of $P_j$ is $\prod_{t=2}^{j-1} q_t$.

Our next step is to verify that if $T_i\neq 0$ and $\g_i$ is commensurable  with $G+H_{i-1}$ the set $\D_i$ is nonempty and has  finitely many elements.

\begin{lemma}\label{semigroup representation}

Suppose that $\a\in(S_k+U_i)$ for some $k\in\N$ and $i\in\N_0$. Then there exists $(L,N)\in\N_0^m\times\N_0^i$
such that  $m=max(k,m_i)$,  $(L,N)$ is irreducible with respect
to $\T$ and $\a=|(L,N)|$.
\end{lemma}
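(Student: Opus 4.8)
The plan is to mimic the structure of the proof of Lemma \ref{group representation}, but working inside semigroups rather than groups, so that the key polynomial relations defining the $P_j$ and $T_{AC}$ can be used to replace ``bad'' monomials by monomials with strictly larger coefficient vectors or strictly smaller value. As in Lemma \ref{group representation}, I would first reduce to the case $k \ge m_i$: if $k \le m_i$ then $S_k + U_i \subseteq S_{m_i} + U_i$, and the conclusion does not depend on $k$, so it suffices to treat $k \ge m_i$, in which case $m = k$. Existence of \emph{some} representation $\a = |(L,N)|$ with $(L,N)\in\N_0^k\times\N_0^i$ is immediate from $\a\in S_k+U_i$; the content is producing one that is irreducible with respect to $\T$.

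The main step is a descent argument on a representation $\a = |(L,N)|$. Suppose $(L,N)$ is \emph{not} irreducible with respect to $\T$. Then there is $(B,D)\in\T$ with $B\preceq L$ and $D\preceq N$, and by construction $(B,D)$ is either of the form $(q_jE_j,\bar 0)$ coming from some $\P_{j+1}$, or $(\bar 0, E_j)$ coming from a vanishing $T_j$, or an element of some $\D_j$. In each case the defining relation for the corresponding jumping polynomial gives an identity in $R$ of the form $P^{B}T^{D} = (\text{unit in }\k)\cdot P^{B'}T^{D'} + (\text{the jumping polynomial})$, where $|(B',D')| = |(B,D)|$ and, crucially, by Lemma \ref{group representation} applied to $|(B,D)|$ together with the reducedness/irreducibility bookkeeping, the exchange $(L,N) \mapsto (L-B+B', N-D+D')$ strictly decreases a fixed monomial-type measure. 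The natural measure is the one already implicit in the construction: order representations by the total exponent vector in a way that makes each replacement strictly decreasing — e.g. since each $q_j$ and $s_j$ bound on the permissible exponents, repeatedly reducing exponents modulo these bounds terminates. I would phrase this as: among all representations of $\a$ as $|(L,N)|$ with $(L,N)\in\N_0^k\times\N_0^i$, choose one minimal with respect to the well-order on $\N_0^k\times\N_0^i$ given by first comparing $\sum$ of the high-index coordinates and so on (the same lexicographic-from-the-top order used in Lemmas \ref{group uniqueness} and \ref{group representation}); if that minimal one were reducible, the substitution above would yield a strictly smaller representation, a contradiction. Hence the minimal representation is irreducible with respect to $\T$.

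There is one subtlety to handle carefully: when we replace $P^B T^D$ using a relation, the ``remainder'' terms (the jumping polynomial $P_{j+1}$ or $T_{AC}$ itself, multiplied by $P^{L-B}T^{N-D}$) have strictly larger valuation, so they do not interfere with the equality of \emph{values} $\a = |(L,N)|$ — we only need the leading-order identity $\n(P^BT^D) = \n(P^{B'}T^{D'})$ and the equality $|(B,D)| = |(B',D')|$, both of which are recorded in the remarks following the construction. I also need that the substitution keeps the exponent vectors in $\N_0$, i.e.\ that $B\preceq L$, $D\preceq N$, which is exactly the failure-of-irreducibility hypothesis. The main obstacle I anticipate is setting up the termination measure so that \emph{every} one of the three types of reduction moves (commensurable-$P$ relations, vanishing-$T$ relations, and $\D_j$-relations) is simultaneously strictly decreasing; once the right well-order is identified — the same ``compare from the highest index down'' order that powers Lemma \ref{group uniqueness}, extended to the pair $(A,C)$ by ranking the $T$-exponents above the $P$-exponents — the argument should go through essentially formally. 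Uniqueness, if claimed, would then follow from a separate statement (Lemma \ref{semigroup uniqueness}); here I only assert existence.
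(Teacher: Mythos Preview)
Your approach is correct but genuinely different from the paper's. The paper argues by double induction on $(i,k)$: for $k>m_i$ it reduces $b_k$ modulo $q_k$, uses Corollary~\ref{P-well-defined} to land the excess in $S_{k-1}$, and applies the hypothesis in $\N_0^{k-1}\times\N_0^i$; for $k=m_i$ it takes $n_i$ to be the \emph{minimal} $j$ with $\a\in(j\g_i+S_{m_i}+U_{i-1})$, applies the hypothesis in $\N_0^{m_i}\times\N_0^{i-1}$, and then checks irreducibility by showing that any offending $(A,C)\in\T$ would have to lie in $\D_i$ and would contradict the minimality of $n_i$. Your route is instead a single descent: pick any representation, and if it is reducible replace the witnessing $(B,D)\in\T$ by its partner $(B',D')$ (namely $(L(j),\bar 0)$, $(L(AC),N(AC))$, or $(\bar 0,\bar 0)$), which preserves $|(\cdot,\cdot)|$ and strictly decreases the reverse-lex order that ranks $N$-coordinates above $L$-coordinates and higher indices above lower ones.

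Both arguments work. The paper's induction is more explicitly constructive and dovetails with the parallel proof of Lemma~\ref{group representation}; your minimality argument is cleaner once the well-order is fixed, and it avoids the separate appeal to Corollary~\ref{P-well-defined}. Two small points you should make explicit when writing this up: first, the replacement stays inside $\N_0^k\times\N_0^i$ because $k\ge m_i\ge m_j$ forces $L(AC)\in\N_0^{m_j}\subseteq\N_0^k$ and $N(AC)\in\N_0^{j-1}\subseteq\N_0^i$; second, the detour through the polynomial identities in $R$ and Lemma~\ref{group representation} is unnecessary here---you only need the numerical equalities $q_j\b_j=L(j)\cdot\b(j-1)$ and $|(A,C)|=|(L(AC),N(AC))|$, which are recorded directly in the construction.
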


\begin{proof}
We first observe that if $i$ is fixed and $k\le m_i$ then $\a\in(S_k+U_i)$ implies
$\a\in(S_{m_i}+U_i)$ whereas the conclusion of the lemma does not depend on $k$.
Thus, it suffices to show that the statement holds for $k, i\in\N_0$ such that  $k\ge m_i$.

We use induction on $(i,k)$. If $i=0$ and $k=1$ the statement is trivial. 

Suppose that $i$ is fixed and $k>m_i$. Since $\a\in (S_k+U_i)$ there exists $(B,D)\in\N_0^k\times\N_0^i$ such that $\a=|(B,D)|$. If $q_k=\infty$ set $l_k=b_k$, otherwise set $l_k$ equal to the residue of $b_k$ modulo $q_k$. Then $(b_k-l_k)\b_k$ is a nonnegative multiple of $q_k\b_k$,  so $(b_k-l_k)\b_k\in S_{k-1}$ by Corollary \ref{P-well-defined}. Since  $\a-l_k\b_k=B\cdot\b(k-1)+(b_k-l_k)\b_k+D\cdot\g(i)$ it follows that $(\a-l_k\b_k)\in (S_{k-1}+U_i)$, and there exists  $(L',N)\in\N_0^{k-1}\times\N_0^i$ such that  $(L',N)$ is irreducible with respect to $\T$ and  $\a-l_k\b_k=|(L',N)|$. We set $L=(l'_1,l'_2,\dots,l'_{k-1},l_k)$ to get $(L,N)\in\N_0^k\times\N_0^i$ such that $\a=L'\cdot\b(k-1)+l_k\b_k+N\cdot\g(i)=|(L,N)|$. It remains to check that $(L,N)$ is irreducible with respect to $\T$. Let $(A,C)\in\N_0^k\times\N_0^i$ be such that $A\preceq L$ and $C\preceq N$. If $a_k=0$ then $A\preceq L'$, and therefore, $(A,C)\notin \T$ as $(L',N)$ is irreducible with respect to $\T$. Assume now that $a_k>0$. Then,  since $0<a_k\le l_k<q_k$, it follows that $(A,C)\notin \P$. Also, since $k>m_j$ for all $j\le i$  and $a_k>0$, it follows that $(A,C)\notin \D_j$ for all $j\le i$. Finally, since $c_j=0$ for all $j>i$, it follows that $(A,C)\notin \D_j$ for all $j>i$. Thus, $(A,C)\notin\T$ when $a_k>0$. Hence, $(L,N)$ is irreducible with respect to $\T$.

Suppose now that $i>0$ and $k=m_i$. Since $\g_i$ generates $(S_{m_i}+U_i)$ over
$(S_{m_i}+U_{i-1})$  we can write
$$(S_{m_i}+U_i)=\bigcup_{j\ge 0}(j\g_i+S_{m_i}+U_{i-1})$$
 Let $n_i$ be the minimal nonnegative integer $j$ such that
$\a\in(j\g_i+S_{m_i}+U_{i-1})$. Then $\a-n_i\g_i\in(S_{m_i}+U_{i-1})$, and since $m_i\ge m_{i-1}$ there exists  $(L,N')\in\N_0^{m_i}\times\N_0^{i-1}$ such that  $(L,N')$ is irreducible with respect to $\T$ and  $\a-n_i\g_i=|(L,N')|$. We set $N=(n'_1,n'_2,\dots,n'_{i-1},n_i)$ to get $(L,N)\in\N_0^{m_i}\times\N_0^i$ such that $\a=L\cdot\b(m_i)+N'\cdot\g(i)+n_i\g_i=|(L,N)|$. It remains to check that $(L,N)$ is irreducible with respect to $\T$. Assume for contradiction that $(A,C)\in\N_0^{m_i}\times\N_0^i$ is such that $A\preceq L$, $C\preceq N$ and $(A,C)\in\T$. If $c_i=0$ then $C\preceq N'$ which contradicts the assumption that  $(L,N')$ is irreducible with respect to $\T$. Thus, $c_i>0$. Since  $c_i>0$  and $c_j=0$ for all $j>i$, $(A,C)\in\T$ implies that $(A,C)\in\D_i$. Thus, $AC$ pushes $\g_i$ into $(S_{m_i}+U_{i-1})$, that is $|(A,C)|\in (S_{m_i}+U_{i-1})$. We write 
$\a=|(L-A,N-C)|+|(A,C)|=(n_i-c_i)\g_i+(L-A)\cdot \b(m_i)+(N-C)\cdot\g(i-1)+|(A,C)|$ to conclude that $\a\in((n_i-c_i)\g_i+S_{m_i}+U_{i-1})$. Since $0\le n_i-c_i<n_i$, this is a contradiction to the choice of $n_i$. Hence, $(L,N)$ is irreducible with respect to $\T$.
\end{proof}

  Assume that $i\in\N$ is such that  $T_i\neq 0$ and $\g_i$ is commensurable with $G+H_{i-1}$. Then by construction $\g_i$ is commensurable with $G_{m_i}+H_{i-1}$. So, there exists  $(A,C)\in \N_0^{m_i}\times\N_0^i$ such that $AC$ pushes $\g_i$ into $S_{m_i}+U_{i-1}$.  Moreover, since $A\cdot\b(m_i)+C\cdot\g(i-1)\in (S_{m_i}+U_{i-1})$, by Lemma \ref{semigroup representation} there exists $(L,N')\in\N_0^{m_i}\times\N_0^{i-1}$ such that $|(L,N')|=A\cdot\b(m_i)+C\cdot\g(i-1)$ and $(L,N')$ is irreducible with respect to $\T$. Setting $N=(n'_1,n'_2,\dots,n_{i-1},c_i)$ we obtain a coefficient vector $(L,N)\in\N_0^{m_i}\times\N_0^i$ which is irreducible with respect to $\T_i$ with $|(L,N)|=|(A,C)|$.
Thus, there exists $(L,N)\in\N_0^{m_i}\times\N_0^i$ such that $(L,N)$ is irreducible with respect to $\T_i$ and  $LN$ pushes $\g_i$ into $S_{m_i}+U_{i-1}$.

Consider the set $\Y_i$ of all  $(L,N)\in\N_0^{m_i}\times\N_0^i$ such that $(L,N)$ is irreducible with respect to $\T_i$ and  $LN$ pushes $\g_i$ into $S_{m_i}+U_{i-1}$. By the above discussion $\Y_i$ is nonempty. Also,  by abuse of notation $\Y_i$ can be viewed as a subset of $\N_0^{m_i+i}$. Then, for $LN\in\Y_i$, $LN$ is reduced with respect to $(\b(m_i),\g_i)$ if and only if  $LN$ is a minimal element of $\Y_i$. Hence, $\D_i$ is the set of minimal elements of a nonempty subset $\Y_i\subseteq \N_0^{m_i+i}$, and therefore, is nonempty. We observe that $\D_i$ cannot contain a strictly increasing sequence of elements as every element but the first one of such a sequence will not be minimal. It will follow that $\D_i$ is  finite from the lemma below.

\begin{lemma}\label {finiteness} Let $n$ be a positive integer. If $\X\subseteq \N_0^n$ is an infinite set then there exists a strictly increasing sequence $\S=\{A_k\}_{k=1}^{\infty}$ in $\X$.
\end{lemma}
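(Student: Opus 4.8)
The plan is to prove the statement by induction on the dimension $n$, which is the natural way to handle finitely generated subsets of $\N_0^n$ together with the coordinatewise order $\preceq$. This is essentially Dickson's lemma (or a piece of the proof of it), so I expect the argument to be short and combinatorial, with the main work being the inductive bookkeeping.

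\textbf{Base case.} For $n=1$, an infinite set $\X\subseteq\N_0$ is an infinite subset of a well-ordered set, so we can simply list its elements in increasing order: pick $A_1=\min\X$, then $A_2=\min(\X\setminus\{A_1\})$, and so on. Since $\X$ is infinite this never terminates and produces a strictly increasing sequence.

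\textbf{Inductive step.} Suppose the statement holds for $n-1$ and let $\X\subseteq\N_0^n$ be infinite. Write each $A\in\X$ as $A=(a',a_n)$ with $a'\in\N_0^{n-1}$ and $a_n\in\N_0$. I would split into two cases according to whether the last coordinate stays bounded on $\X$ or not. If the set $\{a_n : A\in\X\}$ is finite, then by the pigeonhole principle there is a fixed value $c$ such that $\X_c=\{A\in\X : a_n=c\}$ is infinite; projecting away the last coordinate gives an infinite subset of $\N_0^{n-1}$, to which the inductive hypothesis applies, yielding a strictly increasing sequence there, and reattaching the constant $c$ in the last slot gives a strictly increasing sequence in $\X_c\subseteq\X$. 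If instead $\{a_n : A\in\X\}$ is infinite, one can first choose a sequence $\{B_k\}$ in $\X$ whose last coordinates are strictly increasing; among the $B_k$, look at the projections $b_k'\in\N_0^{n-1}$. Either $\{b_k'\}$ has an infinite subset that is the image of a strictly increasing sequence (apply the inductive hypothesis to this infinite subset of $\N_0^{n-1}$, pass to the corresponding subsequence of indices, and note the last coordinates remain strictly increasing along any subsequence), or — and this is the point to be careful about — one argues directly. Actually it is cleanest to run the whole induction through the following standard lemma first: in $\N_0^{n-1}$, every infinite sequence has a $\preceq$-nondecreasing infinite subsequence; combining a $\preceq$-nondecreasing subsequence of the $b_k'$ with the strictly increasing last coordinates of the $B_k$ produces a strictly $\prec$-increasing subsequence of $\X$.

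\textbf{Main obstacle.} The only subtle point is that "strictly increasing" for $\preceq$ requires strict inequality in the whole vector ($A_k\prec A_{k+1}$ means $A_k\preceq A_{k+1}$ and $A_k\neq A_{k+1}$), so I must ensure the chosen subsequence is genuinely increasing and not eventually constant; this is exactly why I track the (possibly) unbounded last coordinate separately and feed only an \emph{infinite} subset — not merely an infinite sequence — into the inductive hypothesis, so that the hypothesis delivers a \emph{strictly} increasing sequence. Once the auxiliary fact that infinite sequences in $\N_0^{n-1}$ have $\preceq$-nondecreasing infinite subsequences is in hand (itself a quick induction of the same flavor), the assembly of the two cases is routine.
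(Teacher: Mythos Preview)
Your proof is correct, but it is organized differently from the paper's. The paper does not split on whether the last coordinate is bounded; instead it first observes that for some $i\in\{1,2\}$ the projection $\pi_{\hat{i}}(\X)\subseteq\N_0^{n-1}$ is already infinite (since $\X\subseteq\pi_1(\X)\times\pi_{\hat{1}}(\X)$), applies the inductive hypothesis directly to that projection to obtain a strictly increasing sequence $\{A'_k\}$, lifts it to a sequence $\{A_k\}$ in $\X$, and only then looks at the single remaining coordinate $\pi_i(A_k)$, handling the finite and infinite subcases by pigeonhole or by extracting an increasing subsequence in $\N_0$. The advantage of the paper's ordering is that after the inductive hypothesis is applied, only a \emph{one}-dimensional residual problem remains, so no auxiliary statement is needed. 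Your route reverses this: you fix the last coordinate first, which in the unbounded case leaves an $(n-1)$-dimensional \emph{sequence} (not a set) to tame, forcing you to invoke the stronger auxiliary fact that every infinite sequence in $\N_0^{n-1}$ has a $\preceq$-nondecreasing subsequence. That lemma is true and, as you say, proved by the same kind of induction, but it is an extra ingredient. Both arguments are valid; the paper's is more self-contained, while yours makes the connection to the well-quasi-order viewpoint more explicit.
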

\begin{proof}
We will argue by induction on $n$. For $n=1$, since $\X$ is an infinite subset of $\N_0$, setting $\S$ equal to the ordered set of all elements of $\X$ proves the statement.  

Let $n>1$. If $\pi_{\hat{1}}(\X)$  and $\pi_{\hat{2}}(\X)$ were both finite, then $\pi_1(\X)=\pi_1(\pi_{\hat {2}}(\X))$ would be finite and, therefore,  $\X$ would be finite as a subset  of $\pi_1(\X)\times\pi_{\hat{1}}(\X)$. Thus, there exists $i\in\{1, 2\}$ such that $\pi_{\hat{i}}(\X)$ is an infinite subset of $\N_0^{n-1}$. Then by the inductive hypothesis we can find a strictly increasing sequence $\cR'=\{A'_k\}_{k=1}^{\infty}$ in $\pi_{\hat{i}}(\X)$. Let $\cR=\{A_k\}_{k=1}^{\infty}$ be a sequence in $\X$ such that $\pi_{\hat{i}}(A_k)=A'_k$ for every $k\in\N$.
Then $\pi_i(\cR)$ is a subset of $\N_0$ that may be finite or infinite.  We will consider these two cases separately.

 Assume first that $\pi_i(\cR)$ is finite. Then since $\cR$ is infinite, there exists an element $a\in \pi_i(\cR)\subseteq\N_0$ such that infinitely many elements of $\cR$ are mapped into $a$. Denote by $\S=\{A_{k_j}\}_{j=1}^{\infty}$ the subsequence $\cR\cap \pi_i^{-1}(\{a\})$. Then, for $1\le j<l$, we have 
 $\pi_i(A_{k_j})=\pi_i(A_{k_l})=a$,  $\pi_m(A_{k_j})\le \pi_m(A_{k_l})$ for all  $m\in\{1,\dots,n\}\setminus\{i\}$  since $\pi_{\hat{i}}(A_{k_j})\prec \pi_{\hat{i}}(A_{k_l})$, and $A_{k_j}\neq A_{k_l}$ since  $\pi_{\hat{i}}(A_{k_j})\neq \pi_{\hat{i}}(A_{k_l})$. Thus $\S$ is a  strictly increasing sequence in $\X$.
 
Assume now that $\pi_i(\cR)$ is infinite. Then the sequence $\{\pi_i(A_k)\}_{k=1}^{\infty}$ is unbounded, and therefore, there exists a strictly increasing subsequence $\S'=\{\pi_i(A_{k_j})\}_{j=1}^{\infty}$. Set $\S=\{A_{k_j}\}_{j=1}^{\infty}$. Then, for $1\le j<l$, we have $\pi_{i}(A_{k_j})<\pi_{i}(A_{k_l})$, and $\pi_m(A_{k_j})\le \pi_m(A_{k_l})$ for all  $m\in\{1,\dots,n\}\setminus\{i\}$ since $\pi_{\hat{i}}(A_{k_j})\prec \pi_{\hat{i}}(A_{k_l})$. Thus $\S$ is a  strictly increasing sequence in $\X$.

\end{proof}

The next two statements will allow us to deduce that for every $i\in\N$ such that  $T_i\neq 0$ and $\g_i$ is commensurable with $G+H_{i-1}$ and every $(A,C)\in\D_i$ there exists a unique $(L(AC),N(AC))\in\N_0^{m_i}\times\N_0^{i-1}$ that  satisfies the equality $|(A,C)|=|(L(AC),N(AC))|$ and is irreducible with respect to $\T_i$. Since $|(A,C)|\in (S_{m_i}+U_{i-1})$ existence of the required coefficient vector $(L(AC),N(AC))$ follows from Lemma \ref{semigroup representation}. We will now verify uniqueness. 

\begin{lemma}\label{auxiliary}
Suppose that $k\in\N$ and $i\in\N_0$ are such that $k>m_i$. If  $\a\in G_k$ is such that $\a\in (G_{k-1}+H_i)$ then $\a\in G_{k-1}$.
\end{lemma}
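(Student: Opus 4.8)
The plan is to work entirely with the canonical ``permissible coefficient vector'' representations supplied by Lemma~\ref{group uniqueness} and Lemma~\ref{group representation}, using the hypothesis $k>m_i$ only to control the lengths of those vectors. The key observation is that $k>m_i$ forces $\max(k,m_i)=k$ and $\max(k-1,m_i)=k-1$, so the canonical representation of $\alpha$ coming from $\alpha\in G_k+H_i$ and the one coming from $\alpha\in G_{k-1}+H_i$ both live inside the single space $\Z^k\times\Z^i$, where the uniqueness statement of Lemma~\ref{group uniqueness} can be applied to compare them.

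First I would show that the canonical permissible representation of $\alpha$ in $\Z^k\times\Z^i$ has trivial $\gamma$--part. Since $\alpha\in G_k=G_k+H_0$ and $m_0=1\le k$, applying Lemma~\ref{group representation} with ``$i=0$'' produces a permissible $\hat L\in\Z^k$ with $\alpha=\hat L\cdot\beta(k)$. Then $(\hat L,\bar{0})\in\Z^k\times\Z^i$ is again a permissible coefficient vector with $|(\hat L,\bar{0})|=\alpha$; on the other hand, applying Lemma~\ref{group representation} to $\alpha\in G_k\subseteq G_k+H_i$ (noting $\max(k,m_i)=k$) produces a permissible vector in $\Z^k\times\Z^i$ with value $\alpha$, so by Lemma~\ref{group uniqueness} this vector is exactly $(\hat L,\bar{0})$. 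In particular the canonical $\Z^k\times\Z^i$--representation of $\alpha$ is $(\hat L,\bar{0})$, with no contribution from the $\gamma_t$.

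Next I would bring in the second hypothesis. Because $\alpha\in G_{k-1}+H_i$ and $k-1\ge m_i$, Lemma~\ref{group representation} gives a permissible $(\tilde L,\tilde N)\in\Z^{k-1}\times\Z^i$ with $\alpha=\tilde L\cdot\beta(k-1)+\tilde N\cdot\gamma(i)$. Viewing $\tilde L$ inside $\Z^k$ with vanishing $\beta_k$--coefficient turns $(\tilde L,\tilde N)$ into a permissible coefficient vector of $\Z^k\times\Z^i$ with value $\alpha$, so by Lemma~\ref{group uniqueness} it must coincide with $(\hat L,\bar{0})$. Hence $\tilde N=\bar{0}$, and therefore $\alpha=\tilde L\cdot\beta(k-1)\in G_{k-1}$, which is the assertion of the lemma.

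I do not expect a serious obstacle here; the one thing to be careful about is the length bookkeeping, namely that the hypothesis $k>m_i$ is precisely what makes $\max(k,m_i)=k$ and $\max(k-1,m_i)=k-1$, and that padding a shorter coefficient vector by a zero entry keeps it permissible (immediate, since $0\le 0<q_k$ and $0\le 0<s_t$ whenever those bounds are finite, and there is no constraint otherwise). No case distinction on whether the $q_j$ or the $s_t$ are finite is needed, because that dichotomy is already absorbed into the definition of ``permissible'', and no finiteness argument of the kind used in Lemma~\ref{finiteness} is required.
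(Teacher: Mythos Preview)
Your argument is correct, but it is organized quite differently from the paper's proof. The paper proceeds by induction on $i$: writing $\alpha=\beta+t\gamma_i$ with $\beta\in G_{k-1}+H_{i-1}$, one observes $t\gamma_i=\alpha-\beta\in G+H_{i-1}$, whence $s_i\mid t$, so $t\gamma_i\in G_{m_i}+H_{i-1}\subseteq G_{k-1}+H_{i-1}$, reducing to the inductive hypothesis. Your approach bypasses the induction entirely by appealing to the canonical permissible representations of Lemma~\ref{group representation} and the uniqueness of Lemma~\ref{group uniqueness}: since $\alpha\in G_k$ forces its permissible representative in $\Z^k\times\Z^i$ to be $(\hat L,\bar 0)$, and the membership $\alpha\in G_{k-1}+H_i$ yields a permissible representative that, after zero-padding the $\beta_k$-slot, also lives in $\Z^k\times\Z^i$, uniqueness forces the $\gamma$-part of the latter to vanish. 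The hypothesis $k>m_i$ is used exactly where you say, to align the ambient lattices. Your route is shorter and reuses existing machinery, while the paper's direct induction is self-contained and makes the role of $s_i$ and $m_i$ explicit; both are valid.
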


\begin{proof} We will argue by induction on $i$. If $i=0$ the statement is trivial. 

Assume that $i\ge 1$. Since $\a\in (G_{k-1}+H_i)$ and $\g_i$ generates $G_{k-1}+H_i$ over $G_{k-1}+H_{i-1}$ we can write $\a=\b+t\g_i$, where $t\in\Z$ and $\b\in (G_{k-1}+ H_{i-1})$. Then $t\g_i=\a-\b$  and since $\a\in G_k\subseteq G$ it follows that $t\g_i\in( G+H_{i-1})$. Thus, $t$ is a multiple of $s_i$, and  therefore,  $t\g_i\in  (G_{m_i}+ H_{i-1})$. Since $ m_i\le k-1$ it follows that $t\g_i\in (G_{k-1}+ H_{i-1})$, and therefore, $\a\in (G_{k-1}+ H_{i-1})$. Then  the inductive hypothesis implies that $\a\in G_{k-1}$.
\end{proof}

\begin{lemma}\label{semigroup uniqueness}
Suppose that $k\in\N$ and $i\in\N_0$.  If $(A,C)\in\N_0^k\times\N_0^i$ and $(B,D)\in\N_0^k\times\N_0^i$ are irreducible with respect to $\T_{i+1}$ and $|(A,C)|=|(B,D)|$ then $(A,C)=(B,D)$.
\end{lemma}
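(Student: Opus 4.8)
The plan is to argue by induction on $i$, at each step peeling off the top $\g$-coordinate and closing with either Lemma \ref{group uniqueness} or the inductive hypothesis. One observation will be used throughout: since $\P\subseteq\T_1\subseteq\T_{i+1}$ and $(q_jE_j,\bar 0)\in\P$ whenever $q_j<\infty$, every $(A,C)$ that is irreducible with respect to $\T_{i+1}$ satisfies $a_j<q_j$ for all such $j$, and likewise for $(B,D)$.

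For the base case $i=0$ we have $\T_1=\P$, so $(A)$ and $(B)$ are irreducible with respect to $\P$, hence are permissible coefficient vectors by the equivalences recorded just before Corollary \ref{P-well-defined}, and $A=B$ follows from Lemma \ref{group uniqueness}. For the inductive step I would first clear away two degenerate cases. If $T_i=0$, then $(\bar 0,E_i)\in\T_{i+1}$ forces $c_i=d_i=0$ and $\g_i=0$. If $\g_i$ is not commensurable with $G+H_{i-1}$, then $c_i\neq d_i$ is impossible, since $|(A,C)|=|(B,D)|$ would exhibit the nonzero integer $|d_i-c_i|$ with $(d_i-c_i)\g_i=\sum_{t=1}^{k}(a_t-b_t)\b_t+\sum_{t=1}^{i-1}(c_t-d_t)\g_t\in G+H_{i-1}$. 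In both cases $(A,(c_1,\dots,c_{i-1}))$ and $(B,(d_1,\dots,d_{i-1}))$ are irreducible with respect to $\T_i$ (they lie below $(A,C)$ and $(B,D)$) and share the value $|(A,C)|-c_i\g_i$, so the inductive hypothesis gives $(A,(c_1,\dots,c_{i-1}))=(B,(d_1,\dots,d_{i-1}))$, whence $(A,C)=(B,D)$.

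The essential case is $T_i\neq 0$, $\g_i$ commensurable with $G+H_{i-1}$, $s_i<\infty$ and $\T_{i+1}=\T_i\cup\D_i$; again it suffices to prove $c_i=d_i$. Suppose not, and by symmetry take $c_i<d_i$. Subtracting the value equations, $(d_i-c_i)\g_i=\sum_{t=1}^{k}(a_t-b_t)\b_t+\sum_{t=1}^{i-1}(c_t-d_t)\g_t\in G_k+H_{i-1}$, and since $d_i-c_i>0$ minimality of $s_i$ forces $s_i\mid(d_i-c_i)$; as $s_i\g_i\in G_{m_i}+H_{i-1}$ by the definition of $m_i$, this gives $\sum_{t=1}^{k}(a_t-b_t)\b_t\in G_{m_i}+H_{i-1}$. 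Applying Lemma \ref{auxiliary} repeatedly, descending the index from $k$ down to $m_i$ and using $a_j,b_j<q_j$ at each step, one obtains $a_j=b_j$ for all $m_i<j\le k$, so $\sum_{t=1}^{k}(a_t-b_t)\b_t=\sum_{t=1}^{m_i}(a_t-b_t)\b_t$. Now decompose $(a_1-b_1,\dots,a_{m_i}-b_{m_i})$ and $(c_1-d_1,\dots,c_{i-1}-d_{i-1})$ into their positive and negative parts and move the negative parts to the other side of this relation: the result is a vector $\xi\in\N_0^{m_i}\times\N_0^i$ whose $\b$-part is $\preceq B$, whose $\g$-part is $\preceq D$, whose $i$-th $\g$-coordinate equals $d_i-c_i>0$, and which pushes $\g_i$ into $S_{m_i}+U_{i-1}$. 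Since $\xi\preceq(B,D)$ and $(B,D)$ is irreducible with respect to $\T_i$, so is $\xi$; hence $\xi\in\Y_i$, and because $\D_i$ is the set of minimal elements of $\Y_i$ there is $\eta\in\D_i$ with $\eta\preceq\xi\preceq(B,D)$. But $\eta\in\D_i\subseteq\T_{i+1}$ contradicts the irreducibility of $(B,D)$ with respect to $\T_{i+1}$. Therefore $c_i=d_i$, and the induction closes.

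The step I expect to be the main obstacle is the one just described: converting the relation $(d_i-c_i)\g_i\in G_k+H_{i-1}$ into an honest element of $\Y_i$ sitting below $(B,D)$. This is exactly where the divisibility $s_i\mid(d_i-c_i)$, the definition of $m_i$, and the iterated use of Lemma \ref{auxiliary}---needed to clear the $\b$-coordinates above $m_i$ so that one lands in $S_{m_i}+U_{i-1}$ and not merely in $S_k+U_{i-1}$---must be dovetailed, and this is precisely the role Lemma \ref{auxiliary} is there to play.
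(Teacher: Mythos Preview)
Your proof is correct and follows the same essential strategy as the paper: use Lemma~\ref{auxiliary} to control the $\b$-coordinates above $m_i$, then manufacture an element of $\D_i$ lying below $(B,D)$ to contradict irreducibility with respect to $\T_{i+1}$. The only difference is organizational---you frame the argument as an induction on $i$ (establishing $c_i=d_i$ and recursing), while the paper argues directly by a case split on $k>m_i$ versus $k\le m_i$ after tacitly reducing $k$ to the largest index where $A$ and $B$ differ; your inductive packaging has the minor advantage of handling the degenerate cases ($T_i=0$ or $\g_i$ incommensurable) explicitly.
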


\begin{proof} Assume for contradiction that $(A,C)\neq (B,D)$. After possibly changing notation we may assume that $a_k\neq b_k$ and, provided $i>0$,  $c_i<d_i$.

If $k>m_i$ or $i=0$ the equality $|(A,C)|=|(B,D)|$ implies that 
$$(a_k-b_k)\b_k=(B-A)\cdot\b(k-1)+(D-C)\cdot\g(i)$$
where the left hand side is an element of $G_k$ and the right hand side is an element of $G_{k-1}+H_i$. Then it follows from Lemma \ref{auxiliary} that $(a_k-b_k)\b_k\in G_{k-1}$, and therefore, $a_k-b_k$ is a multiple of $q_k$. However, since $(A,C)$ and $(B,D)$ are irreducible with respect to $\P$ we have $0\le a_k,b_k<q_k$, and therefore, $0<|a_k-b_k|<q_k$, as $a_k\neq b_k$. This is a contradiction.

If  $0<i$ and $k\le m_i$ set $X\in \N_0^{m_i}$ to be the vector such that $x_t=\max(b_t-a_t,0)$ if $1\le t\le k$ and $x_t=0$ if $k<t\le m_i$,  and set $Y\in \N_0^i$ to be the vector such that $y_t=\max(d_t-c_t,0)$ for all $1\le t\le i$. Also set  $Z\in \N_0^{m_i}$ to be the vector such that $z_t=\max(a_t-b_t,0)$ if $1\le t\le k$ and $z_t=0$ if $k<t\le m_i$,  and $W\in \N_0^{i-1}$ to be the vector such that $w_t=\max(c_t-d_t,0)$ for all $1\le t\le i-1$.  Then the equality $|(A,C)|=|(B,D)|$ implies that 
$$X\cdot\b(m_i)+Y\cdot\g(i)=Z\cdot\b(m_i)+W\cdot\g(i-1)$$
 where the right hand side is an element of $S_{m_i}+U_{i-1}$. Since  $y_i>0$,  $XY$ pushes $(\b(m_i),\g(i))$ into  $S_{m_i}+U_{i-1}$. Denote by $(L,N)$ the minimal vector $(X',Y')\in\N_0^{m_i}\times\N_0^i$ such that $X'Y'$ pushes $(\b(m_i),\g(i))$ into  $S_{m_i}+U_{i-1}$ and $X'Y'\preceq XY$. Then $LN$ is reduced with respect to $(\b(m_i),\g(i))$. Also, since $L\preceq X \preceq B$ and $N\preceq Y \preceq D$ and $(B,D)$ is irreducible with respect to $\T_i$ it follows that $(L,N)$ is irreducible with respect to $\T_i$. Thus, $(L,N)\in\D_i\subseteq \T_{i+1}$.  This contradicts the assumption that $(B,D)$ is irreducible with respect to $\T_{i+1}$. 
\end{proof}

\section{Generating sequences}\label{generating seq}

In this section we will show that the set of jumping polynomials $\{P_j\}_{j>0}\cup\{T_j\}_{j>0}$ forms a generating sequence  of $\n$. Recall that $S_R = \nu(R \backslash \{0\})$ denotes the value semigroup of $R$, and for  $\s \in S_R$, $I_{\s}=\{f\in R \mid \ \n(f)\ge \s\}$  denotes the corresponding valuation ideal of $R$. 
A collection $\{ Q_i \}_{i\in I}$ of elements of $R$ is a {\it generating sequence} of $\n$  if for every $\s\in S_R$ the ideal $I_\s$ is generated by the set $\{Q^B \mid B\in \NN,\ \n(Q^B)\ge \s\}$. 

We observe that if $(A,C)\in\NN\times\NN$ then $\n(P^AT^C)=|(A,C)|$. So, for every $\s\in S_R$  we denote by 
$\A_\s$ the ideal of $R$ generated by  
$\{P^AT^C \mid (A,C)\in \NN\times\NN,\ |{(A,C)}|\ge \s\}$,
 and we denote   by 
$\A^+_\s$ the ideal  generated by $\{P^AT^C \mid (A,C)\in \NN\times\NN,\ |{(A,C)}|> \s\}$. Observe that $\A_\s\subseteq I_\s$. Since $R$ is Noetherian $\A^+_\s$ is finitely generated, and therefore, there exists some $\bar\s\in S_R$ such that $\A^+_\s=A_{\bar\s}$. Observe also that $R=\A_0$ and $m_R=\A_{\b_1}=\A_0^+$. 

To verify that $\{P_j\}_{j> 0}\cup\{T_j\}_{j>0}$ is a generating sequence of $\n$ we will show that $I_\s=\A_\s$ for all $\s\in S_R$.

\begin{lemma}\label{induction-base} Suppose that $k\in\N$, $A\in\N_0^k$ and $\s=A\cdot \b(k)$. Then there exist $B\in\N_0^k$, $\th\in \k$ and $f\in\A^+_\s$ such that  $B\cdot \b(k)=\s$, $(B,\bar{0})$ is irreducible with respect to $\T$, and $P^A=\th P^B+f$. 
\end{lemma}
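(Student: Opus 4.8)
The plan is to reduce the monomial $P^A$ to an irreducible representative by repeatedly exploiting the defining relations $P_{i+1}=P_i^{q_i}-\l_iP^{L(i)}$, which express a forbidden power $P_i^{q_i}$ (with value $q_i\b_i=L(i)\cdot\b(i-1)$) in terms of a monomial of strictly smaller $y$-degree plus the higher jumping polynomial $P_{i+1}$, whose value is strictly larger. The key observation is that whenever $(A,\bar 0)$ is \emph{not} irreducible with respect to $\T$, the equivalence established just before Corollary \ref{P-well-defined} shows $A$ is not a permissible coefficient vector, so $a_i\ge q_i$ for some $i$ with $q_i<\infty$. Writing $P^A=P_i^{q_i}\cdot P^{A-q_iE_i}=(\l_iP^{L(i)}+P_{i+1})P^{A-q_iE_i}$, we get $P^A=\l_i P^{A'}+P^{A-q_iE_i}P_{i+1}$ where $A'=(A-q_iE_i)+L(i)$ satisfies $A'\cdot\b(k)=A\cdot\b(k)=\s$ (using $\n(P_j^{q_j})=\n(P^{L(j)})$) and $\n(P^{A-q_iE_i}P_{i+1})=\n(P^{A-q_iE_i})+\b_{i+1}>\n(P^{A-q_iE_i})+q_i\b_i=\s$, so that second term lies in $\A^+_\s$.

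First I would set up a termination measure showing this rewriting process halts. The natural candidate is the total $y$-degree: by the paragraph preceding Lemma \ref{semigroup representation}, $\deg_y P^{L(i)}<\prod_{t=2}^{i-1}q_t=\deg_y P_i$, so $\deg_y P^{A'}<\deg_y P^A$ strictly when we replace one factor $P_i^{q_i}$ by $\l_i P^{L(i)}$ (here $P_{i+1}$ itself is only placed into the remainder $f\in\A^+_\s$ and is never further expanded). Thus after finitely many steps we reach $P^A=\sum_j \th_j P^{A_j}+ (\text{terms in }\A^+_\s)$ where each $A_j$ is a permissible coefficient vector with $A_j\cdot\b(k)=\s$. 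By Lemma \ref{group uniqueness} (or Corollary \ref{P-well-defined}, since $\s=A\cdot\b(k)\in G_k$ but one must also handle the case $\s<q_k\b_k$), all permissible $A_j$ with value exactly $\s$ coincide — call the common vector $B$ — so the sum collapses to $\th P^B$ with $\th=\sum_j\th_j\in\k$. Setting $f$ equal to the accumulated remainder, which lies in $\A^+_\s$, gives the claim, and $(B,\bar 0)$ is irreducible with respect to $\T$ by the permissibility equivalence.

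The main obstacle I anticipate is bookkeeping the remainder carefully: when I substitute inside a product $P^{A-q_iE_i}P_{i+1}$, I should \emph{not} re-expand this into a sum of monomials and continue rewriting it (that could fail to terminate, since $P_{i+1}$ has large $y$-degree), but rather recognize it wholesale as an element of $\A^+_\s$ and freeze it. Equivalently, one can run the whole argument in the quotient ring $R/\A^+_\s$, where $P^A\equiv\l_i P^{A'}$ and the $y$-degree strictly decreases at each step, giving a clean induction on $\deg_y P^A$ (the base case being $A$ already permissible, possibly after noting $\deg_y=0$ forces $A=a_1E_1$, which is permissible since $q_1=\infty$). A secondary subtlety is that $q_i\b_i=L(i)\cdot\b(i-1)$ uses $\b(i-1)$, so $A'$ may have support only up to index $i-1<k$; this is harmless since we view everything in $\N_0^k$, but one should check $A'\cdot\b(k)=\s$ still holds, which it does because the replaced block has the same value by the remark following the construction of $\{P_j\}$.
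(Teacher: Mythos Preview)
Your proposal is correct and uses the same rewriting idea as the paper --- replace a forbidden power $P_i^{q_i}$ by $\l_iP^{L(i)}+P_{i+1}$, keep the first summand active, and dump the second into $\A_\s^+$ --- but the paper organizes the induction differently. Rather than using the $y$-degree as a termination measure, the paper inducts on the pair $(k,a_k)$ in lexicographic order and always reduces at the \emph{top} index: if $a_k<q_k$, strip off the factor $P_k^{a_k}$ and apply the inductive hypothesis to $\pi_{\hat k}(A)\in\N_0^{k-1}$; if $a_k\ge q_k$, use the relation at level $k$ to pass to $A'=A-q_kE_k+L(k)$ with $a'_k<a_k$. Because only one ``live'' monomial survives at each step, the paper never needs your collapse step via Lemma~\ref{group uniqueness} --- the process is linear and lands directly on a single $\th P^B$. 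Your $y$-degree argument is a perfectly valid alternative termination measure (and your observation that one can work modulo $\A_\s^+$ is a nice way to phrase it), but note that your final appeal to uniqueness is unnecessary: the rewriting produces exactly one surviving term, not a sum of them.
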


\begin{proof} We argue by induction on $(k,a_k)$. If $k=1$ the statement holds with $B=A$, $\th=1$ and $f=0$. Assume that $k>1$. We will consider two cases separately:  $a_k<q_k$ and $a_k\ge q_k$.

If $a_k<q_k$ set $A'=\pi_{\hat{k}}(A)$ and  apply the inductive  hypothesis  to $A'$ and $\s'=A'\cdot \b(k-1)$  to obtain $B'\in\N_0^{k-1}$, $\th'\in \k$ and $f'\in\A^+_{\s'}$ such that  $B'\cdot \b(k-1)=\s'$, $(B',\bar{0})$ is irreducible with respect to $\T$, and $P^{A'}=\th'P^{B'}+f'$. Set $B=B'+a_kE_k$, and observe that $(B,\bar{0})$ is irreducible with respect to $\T$ and 
$$B\cdot\b(k)=B'\cdot\b(k-1)+a_k\b_k=\s'+a_k\b_k=A'\cdot\b(k-1)+a_k\b_k=A\cdot\b(k)=\s$$
Also, observe that  $\A^+_{\s'}\A_{a_k\b_k}\subseteq \A^+_{\s'+a_k\b_k}=\A^+_{\s}$. So, we set $\th=\th'$ and $f=f'P_k^{a_k}$ to obtain  $\th\in \k$ and $f\in \A^+_{\s}$ such that $P^A=P^{A'}P_k^{a_k}=(\th'P^{B'}+f')P_k^{a_k}=\th P^B+f$.

If  $a_k\ge q_k$ set $A'=A-q_kE_k+L(k)$ and $A''=A-q_kE_k+E_{k+1}$.   We observe that  $A'\cdot\b(k)=A\cdot\b(k)-q_k\b_k+L(k)\cdot\b(k-1)=\s$ since  $L(k)\cdot\b(k-1)=q_k\b_k$, and    $A''\cdot\b(k+1)=A\cdot\b(k)-q_k\b_k+\b_{k+1}>\s$ since $\b_{k+1}>q_k\b_k$. Moreover,  
$$P^A=P^{A-q_kE_k}P_k^{q_k}=P^{A-q_kE_k}(\l_kP^{L(k)}+P_{k+1})=\l_k P^{A'}+P^{A''},$$
where $\l_k\in \k$, $P^{A''}\in\A^+_{\s}$ and $A'\cdot\b(k)=\s$. Since $A'\in\N_0^k$ and $a'_k=a_k-q_k<a_k$ applying the inductive hypothesis to $A'$ we find $B\in\N_0^{k}$, $\th'\in \k$ and $f'\in\A^+_{\s}$ such that  $B\cdot \b(k)=\s$, $(B,\bar{0})$ is irreducible with respect to $\T$, and $P^{A'}=\th'P^B+f'$.  Set $\th=\th'\l_k$ and $f=\l_kf'+P^{A''}$ to get 
$$P^A=\l_k P^{A'}+P^{A''}=\l_k(\th'P^B+f')+P^{A''}=\th P^B+f,$$
where $\th\in \k$, $f\in \A^+_{\s}$ and $B\in\N_0^{k}$ are as required.
\end{proof}

\begin{lemma}\label{reduced}
Suppose that $k\in\N$, $i\in\N_0$ and $(A,C)\in\N_0^k\times\N_0^i$. Let $\s=|(A,C)|$ and $m=\max(k,m_i)$. Then there exist $(B,D)\in\N_0^m\times\N_0^i$, $\th\in\k$ and $f\in\A^+_\s$ such that  $|(B,D)|=\s$, $(B,D)$ is irreducible with respect to $\T$, and $P^AT^C=\th P^BT^D+f$. 
\end{lemma}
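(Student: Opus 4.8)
The plan is to induct on the pair $(i, c_i)$ (with the convention that $c_i = 0$ when $i = 0$), using Lemma \ref{induction-base} as the base case $i = 0$. So suppose $i \geq 1$, write $C = (c_1, \dots, c_i)$ and set $C' = \pi_{\hat{i}}(C) = (c_1, \dots, c_{i-1})$. The structure mirrors the proof of Lemma \ref{induction-base}: I would split into two cases according to whether $c_i < s_i$ or $c_i \geq s_i$.

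First, if $c_i < s_i$ (or $s_i = \infty$, the noncommensurable case, which behaves the same way), apply the inductive hypothesis to $(A, C')\in \N_0^k \times \N_0^{i-1}$ with $\sigma' = |(A,C')| = A\cdot\b(k) + C'\cdot\g(i-1)$. Note $\max(k, m_{i-1}) \le \max(k, m_i) = m$, and $m_i \ge m_{i-1}$, so the resulting $(B', D')$ lives in $\N_0^{m}\times\N_0^{i-1}$ (padding with zeros as needed) and is irreducible with respect to $\T$. Set $D = D' + c_i E_i$ and $B = B'$. Then $|(B,D)| = |(B',D')| + c_i\g_i = \sigma' + c_i \g_i = \sigma$. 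Irreducibility of $(B,D)$ with respect to $\T$ needs a small argument: any $(A'', C'')\preceq (B, D)$ with $c''_i = 0$ satisfies $(A'',C'')\preceq (B',D')$, hence is not in $\T$; and if $c''_i > 0$ then, because $0 < c''_i \le c_i < s_i$, the vector $C''$ cannot push $\g_i$ into the relevant semigroup, so $(A'',C'')\notin\D_i$, and since $c''_j = 0$ for $j > i$ it is in no other $\D_j$ or in $\P$ either (this is exactly the kind of reasoning used in the proof of Lemma \ref{semigroup representation}, middle paragraph). Then multiply the identity for $P^AT^{C'}$ by $T_i^{c_i}$: since $\A^+_{\sigma'}\A_{c_i\g_i}\subseteq\A^+_{\sigma' + c_i\g_i} = \A^+_\sigma$, we get $P^AT^C = \th P^B T^D + f$ with $f = f' T_i^{c_i} \in \A^+_\sigma$.

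Second, if $c_i \geq s_i$, use the defining relation $T_i^{s_i} = \m \cdot P^{L}T^{N} + T_{\text{new}}$, i.e.\ the fact that some $(\bar 0 + s_i E_i$-type relation) — more precisely, by Lemma \ref{semigroup representation} applied to $s_i\g_i \in S_{m_i} + U_{i-1}$ there is an irreducible $(L,N')$ with $|(L,N')| = s_i\g_i$, and $(s_i E_i$ reduces against this; the polynomial $T_{AC}$ built in the construction from the relevant reduced element of $\D_i$ gives $P^AT^{C} = \m_{AC} P^{A + L(AC)}T^{C - (\text{part})} \cdots$). Concretely I would pick the element of $\D_i$ that $s_iE_i$ (combined with $A$, $C'$) reduces to, write $T^{c_i\text{-part}}$ in terms of lower-order data times a constant plus an element pushed into $\A^+_\sigma$, obtaining $P^AT^C = \m\, P^{A'}T^{C''} + g$ where $g\in\A^+_\sigma$, $|(A', C'')| = \sigma$, and $C''$ has strictly smaller $i$-th coordinate than $C$ (since we removed $s_i$ copies of $\g_i$ and added back things living in coordinates $\le m_i$ and $< i$). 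Then apply the inductive hypothesis to $(A', C'')$.

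The main obstacle I anticipate is bookkeeping in the second case: making sure the "reduction of $s_iE_i$" step produces a vector whose $i$-th coordinate genuinely drops while all new mass lands in coordinates $\le m_i$ (for the $P$'s) and $\le i - 1$ (for the $T$'s), so that the induction on $(i, c_i)$ is well-founded; and correctly tracking that the error term stays in $\A^+_\sigma$ through the multiplication $P^{A - s_iE_i + \cdots}\cdot(\text{relation})$. This requires invoking $\n(P^AT^C) = |(A,C)|$, the inequality $|(A,C)| = \n(P^AT^C) < \n(T_{AC})$ from the second Remark, and the submultiplicativity $\A^+_{\sigma_1}\A_{\sigma_2}\subseteq\A^+_{\sigma_1 + \sigma_2}$, exactly as in Lemma \ref{induction-base}. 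The commensurability hypothesis is what guarantees such a reducing element of $\D_i$ exists; in the noncommensurable case $s_i = \infty$ and only the first case occurs.
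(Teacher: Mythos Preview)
Your first case ($c_i < s_i$) is correct and essentially matches the paper's treatment of the branch where $(B',D'')$ turns out irreducible.

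The second case has a genuine gap. You invoke Lemma~\ref{semigroup representation} with the hypothesis $s_i\g_i \in S_{m_i}+U_{i-1}$, but this is not guaranteed: the construction only gives $s_i\g_i \in G_{m_i}+H_{i-1}$, the \emph{group}, not the semigroup. (Already in the paper's own example, $s_1\g_1 = 2\sqrt 2 - 1 \notin S_2 = [1,\sqrt 2]$, and accordingly $\D_1 = \{(1,0,1)\}$ contains no element of the shape $(\bar 0, s_1 E_1)$.) So there is in general no relation $T_i^{s_i} = \mu\, P^L T^N + T_{\text{new}}$. More broadly, the analogy with Lemma~\ref{induction-base} breaks down: there the reducing element $q_k E_k$ is automatically $\preceq A$ once $a_k \ge q_k$, because it is supported in a single coordinate; but elements of $\D_i$ can carry nonzero entries in many $P$- and $T$-coordinates, so none of them is forced to sit below $(A,C)$ merely from $c_i \ge s_i$. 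Your fallback phrase ``the element of $\D_i$ that $s_iE_i$ (combined with $A,C'$) reduces to'' does not pin down such an element or show one exists.

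The paper sidesteps this by dropping the $c_i$ vs.\ $s_i$ split. It \emph{always} first applies the inductive hypothesis to $(A,\pi_{\hat i}(C))$, obtaining an irreducible $(B',D')$, sets $D'' = D' + c_iE_i$, and then asks whether $(B',D'')$ is irreducible. If yes, done. If not, then by definition of irreducibility there is some $(X,Y)\in\T$ with $X\preceq B'$, $Y\preceq D''$; irreducibility of $(B',D')$ forces $y_i>0$, hence $(X,Y)\in\D_i$, and now the defining relation $P^XT^Y = \mu_{XY}P^{L(XY)}T^{N(XY)} + T_{XY}$ is available and strictly lowers the $i$-th coordinate. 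The reducing element of $\D_i$ is thus handed to you as the witness to non-irreducibility of the already-partially-reduced $(B',D'')$, rather than having to be located from $c_i \ge s_i$ alone.
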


\begin{proof}

We first observe that if $i$ is fixed and $k\le m_i$ then $(A,C)\in\N_0^k\times\N_0^i$ implies
 $(A,C)\in\N_0^{m_i}\times\N_0^i$ whereas the conclusion of the lemma does not depend on $k$.
Thus, it suffices to show that the result holds for $k, i\in\N_0$ such that  $k\ge m_i$. Assume that $k\ge m_i$ and apply induction on $(i,c_i)$. If $i=0$ Lemma \ref{induction-base} verifies the statement.

Assume that $i>0$. Applying the inductive hypothesis to $(A,\pi_{\hat{i}}(C))\in\N_0^k\times\N_0^{i-1}$ and $\s'=|(A,\pi_{\hat{i}}(C))|$ we obtain $(B',D')\in\N_0^k\times\N_0^{i-1}$, $\th'\in \k$ and $f'\in\A^+_{\s'}$ such that  $|(B',D')|=\s'$, $(B',D')$ is irreducible with respect to $\T$, and $P^AT^{\pi_{\hat{i}}(C)}=\th' P^{B'}T^{D'}+f'$. Observe that $\s'=|(A,C)|-c_i\g_i=\s-c_i\g_i$ and $\A^+_{\s'}\A_{c_i\g_i}\subseteq\A^+_{\s'+c_i\g_i}=\A^+_{\s}$. We set $D''=D'+c_iE_i$ and $f''=f'T_i^{c_i}$ so that 
$$P^AT^C=P^AT^{C_{\hat{i}}}T_i^{c_i}=(\th' P^{B'}T^{D'}+f')T_i^{c_i}=\th'P^{B'}T^{D''}+f'',$$
where $\th'\in \k$, $|(B',D'')|=|(B',D')|+c_i\g_i=\s'+c_i\g_i=\s$ and $f''\in\A^+_{\s}$. Thus, if $(B',D'')$ is irreducible with respect to $\T$, the statement holds with $B=B'$, $D=D''$, $\th=\th'$ and $f=f''$. 

Assume that $(B',D'')$ is not irreducible with respect to $\T$. Then, since $(B',\pi_{\hat{i}}(D''))=(B',D')$ is irreducible with respect to $\T$, there exists $(X,Y)\in\T$ such that  $y_i>0$, $X\preceq B'$ and $Y\preceq D''$. Let $j$ be the index such that $T_j=T_{XY}$. Set $(A',C')=(B',D'')-(X,Y)+(L(XY),N(XY))$ and  $(A'',C'')=(B',D'')-(X,Y)+(\bar{0},E_j)$. We observe that  $|(A',C')|=|(B',D'')|-|(X,Y)|+|(L(XY),N(XY))|=\s$ since  $|(L(XY),N(XY))|=|(X,Y)|$, and    $|(A'',C'')|=|(B',D'')|-|(X,Y)|+\g_j>\s$ since $\g_j>|(X,Y)|$. Moreover,
\begin{align*}
P^AT^C  =\th'P^{B'}T^{D''}+f'' &=\th'P^{B'-X}T^{D''-Y}(\m_{XY}P^{L(XY)}T^{N(XY)}+T_j)+f''=\\
& =\th'\m_{XY}P^{A'}T^{C'}+\th'P^{A''}T^{C''}+f'',
\end{align*} where $\th'\m_{XY}\in \k$, $|(A',C')|=\s$ and $\th'P^{A''}T^{C''}+f''\in\A^+_{\s}$. Since $(A',C')\in\N_0^k\times\N_0^i$ and $c'_i=c_i-y_i<c_i$, applying the inductive hypothesis to $(A',C')$ we find $(B,D)\in\N_0^k\times\N_0^i$, $\th''\in \k$ and $f'''\in\A^+_\s$ such that  $|(B,D)|=\s$, $(B,D)$ is irreducible with respect to $\T$, and $P^{A'}T^{C'}=\th'' P^BT^D+f'''$. Set $\th=\th''\th'\m_{XY}$ and $f=\th'\m_{XY}f'''+\th'P^{A''}T^{C''}+f''$ to get
\begin{align*}
P^AT^C  & =\th'\m_{XY}P^{A'}T^{C'}+\th'P^{A''}T^{C''}+f''=\\
 & =\th'\m_{XY}(\th'' P^BT^D+f''')+\th'P^{A''}T^{C''}+f''=\th P^BT^D+f,
\end{align*}
where $\th\in \k$, $f\in \A^+_{\s}$ and $(B,D) \in\N_0^{k}\times\N_0^i$ are as required.
\end{proof}

\begin{theorem}\label{i=a}
If $\s\in S_R$, then $I_\s=\A_\s$. 

Thus, $\{P_i\}_{i>0}\cup\{T_i\}_{i>0}$ is a generating sequence of $\n$.
\end{theorem}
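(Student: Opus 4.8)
The containment $\A_\s\subseteq I_\s$ is immediate from the observation $\n(P^AT^C)=|(A,C)|$, so the real content is the reverse inclusion $I_\s\subseteq\A_\s$. The plan is to prove this by a well-ordered induction on $\s\in S_R$ (using that $S_R$ is well ordered since $R$ is Noetherian), with the base case $\s=0$ being $R=\A_0$. For the inductive step, fix $\s\in S_R$ and assume $I_{\s'}=\A_{\s'}$ for all $\s'\in S_R$ with $\s'<\s$; equivalently, assume the inductive hypothesis gives us $\A^+_{\bar\s}=\A^+_\s$-style control below $\s$. Take $g\in I_\s$, so $\n(g)\ge\s$. If $\n(g)>\s$ then $g\in I_{\s_+}$, and one reduces to showing $I_{\s_+}=\A_{\s_+}$; so the crux is the case $\n(g)=\s$, where we must write $g$ as an $R$-linear combination of monomials $P^AT^C$ with $|(A,C)|\ge\s$.

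The key step is to produce, for the given $g$ with $\n(g)=\s$, some monomial $P^AT^C$ with $|(A,C)|=\s$ such that $\n\big(g-\th P^AT^C\big)>\s$ for a suitable $\th\in\k$; then $g-\th P^AT^C\in I_{\s_+}$, and an inner induction (on the value, staying inside $S_R$ which is well ordered) together with the fact that $I_{\s_+}=\A_{\s_+}$ finishes the job. To find such a monomial, I would argue that $\s$ must lie in the semigroup $S$ generated by all the $\b_i$ and $\g_i$: this should follow from the structure of $R$ as a regular local ring with parameters $x=P_1$, $y=P_2$, $z=T_1$ — write $g$ as a power series / polynomial expression in $x,y,z$, successively divide by the jumping polynomials $P_j$ and $T_j$ (which are themselves polynomials in these parameters, with $P_j$ monic in $y$ and the $T_j$ constructed to kill leading forms), and track how the value of the leading term is forced into $S_{m}+U_{i}$ for appropriate indices. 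Lemma \ref{reduced} is then exactly the tool that rewrites any monomial $P^AT^C$ modulo $\A^+_\s$ as $\th P^BT^D$ with $(B,D)$ irreducible with respect to $\T$ and $|(B,D)|=|(A,C)|$, so one may assume the monomial we subtract is in irreducible (normal) form; and Lemma \ref{group uniqueness} / Lemma \ref{semigroup uniqueness} guarantee that the irreducible monomial of a given value is unique, so the residue $\th$ in $V/m_V=\k$ is well defined and the subtraction genuinely raises the value.

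I expect the main obstacle to be establishing the "value is in $S$" statement and, more precisely, that the residue of $g/(P^BT^D)$ in $V/m_V$ actually lies in $\k$ so that it can be subtracted — i.e.\ controlling the leading-term behaviour of a general element of $R$ against the jumping polynomials. This is where the hypotheses that the residue field is $\k$ and that $R$ is regular of dimension $3$ with $\n(x)\le\n(y)\le\n(z)$ are used: one needs that after the division algorithm against the $P_j$'s (in the variable $y$, using monicity) and against the $T_j$'s (in the variable $z$), the remainder has strictly larger value, and the leading coefficient — an element of the residue field of $R$ localized appropriately — reduces into $\k$. Once that is in hand, the two reduction lemmas (\ref{induction-base}, \ref{reduced}) and the uniqueness lemmas make the termination of the reduction and the well-definedness of each step routine, and the well-ordering of $S_R$ closes the induction. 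The final sentence, that $\{P_i\}\cup\{T_i\}$ is a generating sequence, is then just unwinding the definition: $I_\s=\A_\s$ says precisely that $I_\s$ is generated by the monomials in the $P_i$ and $T_i$ of value $\ge\s$.
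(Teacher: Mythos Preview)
Your induction is set up in the wrong direction. You assume $I_{\s'}=\A_{\s'}$ for all $\s'<\s$, but both branches of your argument --- the case $\n(g)>\s$ and the subtraction step in the case $\n(g)=\s$ --- invoke $I_{\s_+}=\A_{\s_+}$ with $\s_+>\s$, which the inductive hypothesis does not provide. Reversing the order does not help either: $S_R$ has no maximal element, and the iteration $g\mapsto g-\th P^BT^D$ has no a priori reason to terminate.

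The paper sidesteps this entirely. For a fixed $f\in I_\s$ it sets $\W=\{\a\in S_R\mid f\in\A_\a\}$; since $0\in\W$ and $\W\subseteq S_R\cap[0,\n(f)]$ is finite (Lemma~3, App.~3 of \cite{ZS}), $\W$ has a maximal element $\t$. Writing $f=\sum_e g_eP^{A_e}T^{C_e}+f'$ with each $|(A_e,C_e)|=\t$ and $f'\in\A^+_\t$, Lemma~\ref{reduced} collapses every monomial onto the \emph{unique} irreducible $(B,D)$ with $|(B,D)|=\t$ (uniqueness from Lemmas~\ref{semigroup representation} and \ref{semigroup uniqueness}), giving $f=gP^BT^D+h$ with $h\in\A^+_\t$. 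If $g\in m_R$ then $f\in\A^+_\t$, contradicting maximality of $\t$; hence $g$ is a unit, $\n(f)=\t\ge\s$, and $f\in\A_\t\subseteq\A_\s$. Notice that your ``main obstacle'' evaporates in this framework: that $\n(f)$ lies in $S+U$ is a \emph{consequence} of the argument ($\n(f)=\t=|(B,D)|$), not a prerequisite, so no division algorithm against the $T_j$ --- which are not monic in $z$ (already $T_2=xz-y^2$) --- is ever needed.
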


\begin{proof} It suffices to check that if $f\in I_\s$ for some $\s\in S_R$ then $f\in\A_\s$.

Let $f\in I_\s$ and $\W=\{\a\in S_R\mid f\in\A_\a\}$. Observe that $0\in\W$, and  for every $\a\in\W$ the inequality  $\a\le\n(f)\le\s$ holds. Hence, $\W$ is a nonempty subset of $S_R\cap[0,\n(f)]$. Since $S_R\cap[0,\n(f)]$ is a finite set (see Lemma 3, App. 3 of \cite{ZS}), $\W$ has a maximal element. Let $\t=\max\W$. Since $f\in\A_\t$  there exists
a presentation
$$
f=\sum_{e=1}^N g_eP^{A_e}T^{C_e}+f',
$$
where $N\in\N$, $f'\in \A^+_\t$, and $g_e\in R$,  $(A_e, C_e)\in\NN\times\NN$ with $|(A_e,C_e)|=\t$ for each $e$.

For the $N$ elements $(A_e, C_e)$ of $\NN\times\NN$ let $k,i\in \N$ be such that $(A_e, C_e)\in\N_0^k\times\N_0^i$ for every $e$, and assume that $k\ge m_i$. Since $|(A_e,C_e)|=\t$ for each $e$, by  Lemmas \ref{semigroup representation} and \ref{semigroup uniqueness}, there exists a unique $(B,D)\in\N_0^k\times\N_0^i$ such that $(B,D)$ is irreducible with respect to $\T$ and $|(B,D)|=|(A_e,C_e)|$. Applying Lemma \ref{reduced} to every $(A_e,C_e)$  we get $P^{A_e}T^{C_e}=\th_e P^BT^D+h_e$, where $\th_e\in \k$, $h_e\in\A^+_\t$. Thus
$$
f=\left(\sum_{e=1}^N \th_eg_e\right)P^BT^D+\sum_{e=1}^N h_eg_e+f'= gP^BT^D+h,
$$
where $g=(\sum_{e=1}^N\th_e g_e)$ and  $h=\sum_{e=1}^N h_eg_e+f'$. In particular, $h\in\A_{\t}^+$ and $\n(h)>\t$. 

If $g\in m_R$ then
$ gP^BT^D\in\A^+_0\A_\t\subseteq\A^+_\t$, 
and therefore, $f\in\A^+_\t$.
Let $\bar{\t}\in S_R$ be such that $\A^+_\t=\A_{\bar\t}$. Then $\bar{\t}>\t$ and $f\in\A_{\bar\t}$. This contradicts the choice of $\t$ as a maximal element of $\W$. Thus, $g$ must be a unit in $R$. So, $\n(g)=0$, $\t=\n(gP^BT^D)<\n(h)$ and $\n(f)=\n(gP^BT^D)=\t$. Since $f\in I_{\s}$ it follows that  $\t\ge\s$, and therefore, $\A_\t\subseteq\A_\s$. Hence, $f\in\A_\s$.
\end{proof}

The next lemma will identify some reductions in the set $\{P_i\}_{i\ge 0}\cup\{T_i\}_{i>0}$ that preserve the claim of Theorem \ref{i=a}.

\begin{lemma}\label{redundant} Suppose that $J,J'\subseteq \N$. 

Suppose that for every $i\in J$, 
$\b_i\in S_{i-1}$ and $P_i$ can be represented as a finite linear combination of the terms in the form $P^XT^Y$ with $(X,Y)$ irreducible with respect to $\T$. That is, suppose that for every $i\in J$, $\b_i\in S_{i-1}$ and there exist $M_i\in\N$, $\chi_1(i),\dots,\chi_{M_i}(i)\in\k\setminus \{0\}$ and distinct $(X_1(i),Y_1(i)),\dots,(X_{M_i}(i),Y_{M_i}(i))\in \NN\times \NN$ such that
  $P_i=\sum _{e=1} ^{M_i}\chi_e(i) P^{X_e(i)}T^{Y_e(i)}$ and
$(X_e(i),Y_e(i))$ is irreducible with respect to $\T$ for each $e$.

Suppose also that for every $j\in J'$, $\g_j\in S_{k_j}+U_{j-1}$ and $T_j$ can be represented as a finite (possibly trivial) linear combination of the terms in the form $P^ZT^W$ with $(Z,W)$ irreducible with respect to $\T$. That is, suppose that for every $j\in J'$, $\g_j\in S_{m_j}+U_{j-1}$ and there exist $N_j\in\N_0$, $\psi_1(j),\dots,\psi_{N_j}(j)\in\k\setminus\{0\}$ and distinct $(Z_1(j),W_1(j)),\dots,(Z_{N_j}(j),W_{N_j}(j))\in \NN\times \NN$ such that
  $T_j=\sum _{e=1} ^{N_j}\psi_e(j) P^{Z_e(j)}T^{W_e(j)}$ and
$(Z_e(j),W_e(j))$ is irreducible with respect to $\T$ for each $e$.

 Then $\{P_i\}_{i\in\N\setminus J}\cup\{T_j\}_{j\in\N\setminus J'}$ is a generating sequence of $\n$.
\end{lemma}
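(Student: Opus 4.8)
The plan is to prove that $I_\s=\A'_\s$ for every $\s\in S_R$, where $\A'_\s$ denotes the ideal of $R$ generated by those monomials $P^AT^C$, $(A,C)\in\NN\times\NN$, with $a_i=0$ for all $i\in J$, $c_j=0$ for all $j\in J'$ and $|(A,C)|\ge\s$; these are exactly the products of powers of the elements of $\{P_i\}_{i\in\N\setminus J}\cup\{T_j\}_{j\in\N\setminus J'}$ of value at least $\s$. The inclusion $\A'_\s\subseteq I_\s$ is immediate, and for the reverse inclusion I would invoke Theorem \ref{i=a} to get $I_\s=\A_\s$, thereby reducing the problem to showing that each generator $P^AT^C$ of $\A_\s$ (so $|(A,C)|\ge\s$) lies in $\A'_\s$.

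The heart of the argument is the claim that every coefficient vector $(X,Y)\in\NN\times\NN$ that is irreducible with respect to $\T$ satisfies $x_i=0$ for all $i\in J$ and $y_j=0$ for all $j\in J'$. For $i\in J$: since $\b_i\in S_{i-1}\subseteq G_{i-1}$ we get $q_i=1$, so $(E_i,\bar{0})=(q_iE_i,\bar{0})\in\P_{i+1}\subseteq\P=\T_1\subseteq\T$, and $x_i\ge 1$ would make $(E_i,\bar{0})\preceq(X,Y)$, contradicting irreducibility. For $j\in J'$: if $T_j=0$ then $(\bar{0},E_j)\in\T_{j+1}\subseteq\T$ by construction, while if $T_j\neq 0$ then $\g_j\in S_{m_j}+U_{j-1}\subseteq G+H_{j-1}$ forces $s_j=1$, and one checks that $(\bar{0},E_j)$ is irreducible with respect to $\T_j$ and reduced with respect to $(\b(m_j),\g(j))$ — it pushes $\g_j$ into $S_{m_j}+U_{j-1}$ since $\g_j\in S_{m_j}+U_{j-1}$, and it is clearly the least vector doing so — hence $(\bar{0},E_j)\in\D_j\subseteq\T_{j+1}\subseteq\T$; again $y_j\ge 1$ would contradict irreducibility. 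Carrying out these membership verifications, by tracing through the recursive definitions of $q_i$, $s_i$, $m_j$, $\P$, $\D_j$ and $\T$ in Section \ref{construction}, is where the bulk of the work lies, and I expect this to be the main obstacle.

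Granting the claim, every monomial $P^{X_e(i)}T^{Y_e(i)}$ in the hypothesized expression for $P_i$ ($i\in J$), and every $P^{Z_e(j)}T^{W_e(j)}$ in that for $T_j$ ($j\in J'$), is a product of powers of elements of $\{P_l\}_{l\in\N\setminus J}\cup\{T_l\}_{l\in\N\setminus J'}$. Moreover, since the exponent vectors appearing in each such expression are distinct and irreducible with respect to $\T$, Lemma \ref{semigroup uniqueness} shows the corresponding monomials have pairwise distinct values; so there is no cancellation at the minimal value, whence $\min_e|(X_e(i),Y_e(i))|=\n(P_i)=\b_i$ and $\min_e|(Z_e(j),W_e(j))|=\n(T_j)=\g_j$. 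In particular each of these monomials has value at least $\b_i$, respectively at least $\g_j$.

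To finish, I would take a generator $P^AT^C$ of $\A_\s$, substitute the above expressions for those $P_i$ with $i\in J$ and $a_i>0$ and those $T_j$ with $j\in J'$ and $c_j>0$ (only finitely many indices are involved), and expand. The outcome is a finite $\k$-linear combination of monomials in $\{P_l\}_{l\in\N\setminus J}\cup\{T_l\}_{l\in\N\setminus J'}$, and by the value estimate of the previous paragraph each such monomial has value at least $|(A',C')|+\sum_{i\in J}a_i\b_i+\sum_{j\in J'}c_j\g_j=|(A,C)|\ge\s$, where $A'$ and $C'$ are the restrictions of $A$ and $C$ to the indices outside $J$ and outside $J'$. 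Hence each monomial in the expansion is a generator of $\A'_\s$, so $P^AT^C\in\A'_\s$; this yields $\A_\s\subseteq\A'_\s$, and therefore $I_\s=\A_\s\subseteq\A'_\s\subseteq I_\s$. Since $\s\in S_R$ was arbitrary, $\{P_i\}_{i\in\N\setminus J}\cup\{T_j\}_{j\in\N\setminus J'}$ is a generating sequence of $\n$.
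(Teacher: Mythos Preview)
Your proposal is correct and follows essentially the same route as the paper: define $\A'_\s$, reduce via Theorem~\ref{i=a} to showing each generator $P^AT^C$ of $\A_\s$ lies in $\A'_\s$, establish that irreducibility with respect to $\T$ forces vanishing at indices in $J$ and $J'$ (via $q_i=1$ and $(\bar 0,E_j)\in\T$), use Lemma~\ref{semigroup uniqueness} to get the value bounds $|(X_e(i),Y_e(i))|\ge\b_i$ and $|(Z_e(j),W_e(j))|\ge\g_j$, and conclude. The only cosmetic difference is that the paper packages the last step as the ideal-theoretic inclusion $P^AT^C\in\prod_i(\A'_{\b_i})^{a_i}\prod_j(\A'_{\g_j})^{c_j}\subseteq\A'_{|(A,C)|}$ rather than your explicit substitute-and-expand, but the content is the same.
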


\begin{proof} For $\s\in S_R$ denote by $A'_{\s}$  the ideal of $R$ generated by 
$$\{P^AT^C\mid (A,C)\in\NN\times\NN, |(A,C)|\ge \s, \forall i\in J, \forall j\in J', a_i=0, c_j=0\}$$
Since $I_{\s}=\A_{\s}$ and $\A'_{\s}\subseteq \A_{\s}$, to prove the statement of the lemma it suffices to verify that every generator of $\A_{\s}$ belongs to $\A'_{\s}$.

Observe first that for $i\in J$, since $\b_i\in S_{i-1}$, $q_i=1$, and therefore, $(E_i,\bar{0})\in\T$. Hence, if $(A,C)\in\NN\times\NN$ is irreducible with respect to $\T$, then $a_i=0$ for all $i\in J$. Similarly, for $j\in J'$, since  $\g_j\in S_{m_j}+U_{j-1}$, $\bar {0}E_j$ is reduced with respect to $(\b_1,\dots,\b_{m_j},\g_1,\dots,\g_j)$, and therefore $(\bar{0},E_j)\in\T$. Hence, if $(A,C)\in\NN\times\NN$ is irreducible with respect to $\T$, then $c_j=0$ for all $j\in J'$.

Moreover, for $i\in J$, since  $(X_1(i),Y_1(i)),\dots,(X_{M_i}(i),Y_{M_i}(i))$ are distinct and irreducible with respect to $\T$, if $e_1\neq e_2$ then $|(X_{e_1}(i),Y_{e_1}(i))|\neq |(X_{e_2}(i),Y_{e_2}(i))|$ as implied by Lemma \ref{semigroup uniqueness}. Since $\n(\chi_{e}(i)P^{X_{e}(i)}T^{Y_{e}(i)})=|(X_{e}(i),Y_{e})|$, it follows that $\n(\chi_{e_1}(i)P^{X_{e_1}(i)}T^{Y_{e_1}(i)})\neq \n(\chi_{e_2}(i)P^{X_{e_2}(i)}T^{Y_{e_2}(i)})$  for all $e_1\neq e_2$.  Hence, in the triangle inequality  $\n(P_i)\ge\min_{1\le e\le M_i} (|(X_{e}(i),Y_{e}(i))|)$ equality holds. In particular,  $|(X_{e}(i),Y_{e}(i))|\ge \b_i$ for each $e$. Similarly, for every $j\in J'$, since  $(Z_1(j),W_1(j)),\dots$, $(Z_{N_j}(j),W_{N_j}(j))$ are distinct and irreducible with respect to $\T$, Lemma \ref{semigroup uniqueness} implies that  $|(Z_{e_1}(j),W_{e_1}(j))|\neq |(Z_{e_2}(j),W_{e_2}(j))|$ for all $e_1\neq e_2$. So, $|(Z_{e}(j),W_{e}(j))|\ge\g_j$ for each $e$.

Combining the two observations above we conclude that if $i\in J$ then $P^{X_{e}(i)}T^{Y_{e}(i)}\in\A'_{\b_i}$ for each $e$, and therefore $P_i\in\A'_{\b_i}$. Similarly, if $j\in J'$ then $T_j\in \A'_{\g_j}$. Also,  if $i\in\N\setminus J$  and $j\in\N\setminus J'$ then $P_i\in\A'_{\b_i}$ and $T_j\in\A'_{\g_j}$ by construction.

Assume that $\s\in S_R$ and $(A,C)\in\NN\times\NN$ is such that $P^AT^C$ is a generator of $\A_{\s}$. Then $|(A,C)|\ge\s$ and there exist $M,N\in\N$ such that $(A,C)\in \N_0^M\times\N_0^N$. Hence, 
$$P^AT^C=\prod _{i=1}^M P_i^{a_i}\prod_{j=1}^N T_j^{c_j}\in\prod_{i=1}^M (\A'_{\b_i})^{a_i}\prod_{j=1}^N(\A'_{\g_j})^{c_j}\subseteq\A'_{\sum_{i=1}^Ma_i\b_i+\sum_{j=1}^N c_j\g_j}= \A'_{|(A,C)|}\subseteq\A'_{\s}$$

\end{proof}

Jumping polynomials that satisfy the assumptions of Lemma \ref{redundant} will be called {\it {redundant}}. 


\section{Example} \label{example}
Suppose that $\k$  is a field and let $\n$ be a $\k$-valuation on  $\k(x,y,z')$ defined by the following values of variables:  $\n(x)=1$, $\n(y)=\sqrt 2$ and $\n(z')=\sqrt{51}-5$. Then $\n$ is a rank 1 rational rank 3 monomial valuation on $\k(x,y,z')$. By abuse of notation we will denote by $\n$ the restriction of $\n$ to any subfield $K$ of $\k(x,y,z')$.

Let $z=\frac{y^2}{x}+\frac{y^5}{x^5}+z'$, then $\n(z)=2\sqrt{2}-1$ and $\k[x,y,z]_{(x,z,y)}$ is a local ring dominated by $\n$. We refer the reader to a \cite{MacL},\cite{V},\cite{Sp+},\cite{Sp+2} and \cite{NS} for the definition and thorough discussion of the properties of key polynomials. For our example the situation is as simple as possible, we have
\begin{itemize}
\item $\{x\}$ is a complete set of key polynomials for $\n$ corresponding to the field extension $\k\hookrightarrow \k(x)$ \item $\{y\}$ is a complete set of key polynomials for $\n$ corresponding to the field extension $\k(x)\hookrightarrow \k(x,y)$ 
\item $\{z-\frac{y^2}{x}-\frac{y^5}{x^5}\}$ is a complete set of key polynomials for $\n$ corresponding to the field extension $\k(x,y)\hookrightarrow \k(x,y,z)$
\end{itemize}
We will  now use the algorithm of section \ref{construction} to produce a generating sequence of $\n$ in $\k[x,y,z]_{(x,y,z)}$.

We have $P_1=x$, $P_2=y$ with $\b_1=1$ and $\b_2=\sqrt 2$. Since $\sqrt 2$ is not commensurable with $\Z$, the sequence of $P_i$'s is finite with $P_2$ being the last element.  Then
$$T_1=z=x^{-1}y^2+x^{-5}y^{-5}+z'=x^{-1}y^2+hvt  \hspace{2cm} \g_1=2\sqrt{2}-1$$

Since $G+H_0=<1,\sqrt{2}>$, we get $s_1=1$, $m_1=2$ and $\D_1=\{(1,0,1)\}$. Observe that since  $m_1=2$  and $G=G_2$, $m_i=2$ for all $i$. Also, the only immediate successor of $T_1$ is 
$$T_2=xz-y^2=x^{-4}y^5+hvt \hspace{5cm} \g_2=5\sqrt{2}-4$$

Since $G+H_1=<1,\sqrt{2}>$, we get $s_2=1$. Since $S+U_1=[1,\sqrt{2},2\sqrt{2}-1]$, we get $\D_2=\{(2,0,0,1), (1,1,0,1), (0,3,0,1)\}$. So, the three immediate successors of $T_2$ are 
$$T_3=x^2T_2-yz^2=x^3z'+hvt \hspace{5cm} \g_3=\sqrt{51}-2$$
$$T_4=xyT_2-z^3=x^2yz'+hvt \hspace{3.9cm} \g_4=\sqrt{51}+\sqrt 2-3$$
$$T_5=y^3T_2-z^4=xy^3z'+hvt\hspace{3.8cm}\g_5=\sqrt{51}+3\sqrt 2-4$$
Observe also that $T_5=zT_4-yT_2^2$.

Since $G+H_2=<1,\sqrt{2}>$, we have $\g_3$ is not commensurable with $G+H_2$, $s_3=\infty$ and $\D_3=\emptyset$. Since $G+H_3=\n(\k(x,y,z))$, $s_i=1$ for all $i\ge 4$. Also, since $S+U_3=[1,\sqrt{2},2\sqrt{2}-1,5\sqrt{2}-4,\sqrt{51}-2]$, we have $\D_4=\{(1,0,0,0,0,1), (0,1,0,0,0,1),\\
 (0,0,2,0,0,1), (0,0,1,0,0,3)\}$. So, the four immediate successors of $T_4$ are 
 $$T_6=xT_4-yT_3=-x^{-6}y^9+hvt\hspace{4.3cm} \g_6=9\sqrt{2}-6$$
$$T_7=yT_4-zT_3=-x^{-7}y^{10}+hvt \hspace{4.0cm} \g_7=10\sqrt{2}-7$$
$$T_8=z^2T_4-xT_2T_3=-x^5(z')^2+hvt\hspace{3.4cm}\g_8=2\sqrt{51}-5$$
$$T_9=zT_4^3-T_2T_3^3=-x^{10}(z')^2+hvt\hspace{3.4cm}\g_9=4\sqrt{51}-10$$
Observe also that $T_6=-z^2T_2$, $T_7=-xT_2^2$ and $T_9=-T_8^2-zT_2^2T_3T_4+y^2z^2T_2^4+z^2T_2^5$.

Since $T_5$, $T_6$ and $T_7$ are explicitly written as linear combinations of the terms $P^AT^C$, where $(A,C)$ is irreducible with respect to $\T$, they are redundant and, moreover,  we can see that their successors are also redundant. We still note that their immediate successors are 
$$T_{10}=T_5-zT_4=-yT_2^2\hspace{4.5cm}\g_{10}=11\sqrt 2-8$$
 $$T_{11}=T_6+z^2T_2=0\hspace{6.7cm} \g_{11}=0$$
 $$T_{12}=T_7+xT_2^2=0\hspace{6.8cm} \g_{12}=0$$

 Since $S+U_7=[1,\sqrt{2},2\sqrt{2}-1,5\sqrt{2}-4,\sqrt{51}-2, \sqrt{51}+\sqrt 2-3]$, we have $\D_8=\{(1,0,\dots,0,1), (0,1,0,\dots,0,1), (0,0,1,0,\dots,0,1), (0,0,0,1,3,0,0,0,0,1)\}$.  So, the four immediate successors of $T_8$ are 
 $$T_{13}=xT_8+T_3^2=-x^{-8}y^{13}+hvt \hspace{3.3cm}  \g_{13}=13\sqrt{2}-8$$
$$T_{14}=yT_8+T_3T_4=-x^{-9}y^{14}+hvt \hspace{3.0cm}  \g_{14}=14\sqrt{2}-9$$
$$T_{15}=zT_8+T_4^2=-x^{-10}y^{15}+hvt \hspace{3.0cm} \g_{15}=15\sqrt{2}-10$$
$$T_{16}=T_2T_3^3T_8+T_4^5=x^{15}(z')^3+hvt\hspace{2.8cm}\g_{16}=6\sqrt{51}-15$$
We observe that $T_{13}=-z^4T_2$, $T_{14}=-y^2zT_2^2-zT_2^3$, $T_{15}=-yz^2T_2^2-T_2^2T_3$ and\\
$T_{16}=T_8^3-yzT_2^3T_3^3-T_2^2T_3T_4^3+z^3T_2^4T_3^2-z^7T_2^5-2y^2zT_2^6T_3+zT_2^5T_4^2+yz^3T_2^7-zT_2^7T_3$.

Thus, we see that $T_9$, $T_{10}$, $T_{11}$, $T_{12}$, $T_{13}$, $T_{14}$, $T_{15}$ and $T_{16}$ as well as all their successors  are redundant jumping polynomials.

  
 Hence, $\{x,y,z,T_2,T_3,T_4,T_8\}$ is a generating sequence of $\n$ in $\k[x,y,z]_{(x,y,z)}$. Since the value semigroup is minimally generated by $\{\n(x),\n(y),\n(z),\n(T_2),\n(T_3),\n(T_4),\n(T_8)\}$, this sequence is a minimal generating sequence. So, 
 $$\{x, y, z, xz-y^2, x^3z-x^2y^2-yz^2, x^2yz-xy^3-z^3, -x^5z^2+2x^4y^2z-x^3y^4+2x^2yz^3-2xy^3z^2-z^5 \}$$
 is a minimal generating sequence of $\n$ in $\k[x,y,z]_{(x,y,z)}$.

\end{document}